\documentclass[11pt,leqno]{amsart}
\usepackage{verbatim,amssymb,amsfonts,latexsym,amsmath,amsthm,tikz,bbm,nicefrac,xspace,graphicx,mathtools,mathdots}
\usetikzlibrary{hobby}


\newtheorem{theorem}{Theorem}[section]
\newtheorem{lemma}[theorem]{Lemma}

\newtheorem*{maintheorem}{Main Theorem}

\newtheorem*{theorem*}{Theorem}
\newtheorem*{corollary*}{Corollary}
\newtheorem*{claim1}{Claim 1}
\newtheorem*{claim2}{Claim 2}
\newtheorem*{claim3}{Claim 3}
\newtheorem*{claim4}{Claim 4}
\newtheorem*{claim5}{Claim 5}
\newtheorem*{claim}{Claim}
\newtheorem*{sub-claim}{sub-claim}
\theoremstyle{definition}

\newtheorem{definition}[theorem]{Definition}

\theoremstyle{remark}




\newcommand{\R}{\mathbb{R}}
\newcommand{\N}{\mathbb{N}}


\newcommand{\C}{\mathcal{C}}

\newcommand{\F}{\mathcal{F}}
\newcommand{\G}{\mathcal{G}}

\renewcommand{\P}{\mathcal{P}}

\newcommand{\explicitSet}[1]{\left\lbrace #1 \right\rbrace}

\newcommand{\set}[2]{\explicitSet{#1 \colon #2}}


\newcommand{\e}{\varepsilon}

\newcommand{\w}{\omega}
\newcommand{\0}{\emptyset}
\newcommand{\sub}{\subseteq}









\newcommand{\card}[1]{\left\lvert #1 \right\rvert}






\begin{document}

\title{Three conditionally convergent series}
\author{Will Brian}
\address {
Will Brian\\
Department of Mathematics and Statistics\\
University of North Carolina at Charlotte\\
9201 University City Blvd.\\
Charlotte, NC 28223}
\email{wbrian.math@gmail.com}

\begin{abstract}
It is proved that given any three conditionally convergent series of real numbers, there is a single sequence of natural numbers such that each of the corresponding three subseries sums to either $\infty$ or $-\infty$. An example is provided to show that the analogous statement for four series is false.
\end{abstract}

\maketitle

\section{Introduction}

Recall that an infinite series $\sum_{n \in \N}a_n$ is \emph{conditionally convergent} if it converges, but the series $\sum_{n \in \N}|a_n|$ does not. Every conditionally convergent series has a subseries summing to $\infty$, for example the subseries obtained by summing only over positive terms, and a different subseries summing to $-\infty$, for example the subseries obtained by summing only over negative terms. A conditionally convergent series also has many subseries that diverge by oscillation; for example, one may construct such a subseries by interleaving long stretches of positive terms with long stretches of negative terms.

Let us say that a set $A \sub \N$ sends a series $\sum_{n \in \N}a_n$ of real numbers \emph{to infinity} if the subseries $\sum_{n \in A}a_n$ consisting only of those terms with index in $A$ sums either to $\infty$ or to $-\infty$.

\begin{maintheorem}\label{thm:threeseries}
For any three conditionally convergent series, there is some $A \sub \N$ sending all three series to infinity.
\end{maintheorem}

Notice that we do not require that all three of our subseries sum to $\infty$, or that they all sum to $-\infty$. This may be impossible, even for just two series $\sum_{n \in \N}a_n$ and $\sum_{n \in \N}b_n$, for example if $a_n = -b_n$ for all $n$. However, we do require that each subseries is made to diverge either to $\infty$ or to $-\infty$, and not merely to diverge by oscillation.

Section~\ref{sec:main} contains a proof of the main theorem.
Section~\ref{sec:example} contains an example of four conditionally convergent series such that no single $A \sub \N$ sends all four series to infinity. Thus the main theorem cannot be improved by replacing three with four series.


The ideas in this paper emerged from set-theoretic investigations into cardinal characteristics of the continuum in \cite{BBBHHL} and \cite{BBH}. In the course of these investigations, the question arose: \emph{How small can a collection $\C$ of conditionally convergent series be with the property that every $A \sub \N$ fails to send some member of $\C$ to infinity?} (Specifically, the answer to this question is the so-called ``Galois-Tukey dual'' -- see \cite{Vojtas}, or section 4 of \cite{Blass} -- of the uncountable cardinal $\ss_i$ as defined in \cite{BBH}.) We suspected that the answer to this question should be an uncountable cardinal number. Without too much difficulty, one may prove upper bounds for this number of $\mathrm{non}(\mathcal N)$ and $\mathrm{non}(\mathcal M)$, the smallest size of a non-null and non-meager subset of $\R$, respectively, and $\mathfrak{r}$, the so-called \emph{reaping number}, another uncountable cardinal. (Proofs of these upper bounds can be found, in dual form, in \cite{BBH}). It was a surprise to discover that the correct answer to this question is $4$.

The situation is starkly different when one considers rearrangements rather than subseries. Using ideas related to the Levy-Steinitz theorem \cite{rosen}, one may show (see \cite{BBBHHL}, Section 6) that for any countable collection $\set{\sum_na^i_n}{i \in \w}$ of conditionally convergent series, there is a single permutation $\pi$ of $\N$ such that each of the rearranged series $\sum_na_{\pi(n)}$ sums to $\infty$ or to $-\infty$. In other words, if we change the question from the previous paragraph by replacing the idea of taking subseries by the idea of taking rearrangements of a series, then we really do get an uncountable cardinal number. 

\vspace{3mm}

If we replace ``three" with ``two" in the statement of the main theorem, then it becomes relatively easy to prove. We will go through the proof of this easier result now, because doing so will illuminate some of the difficulties involved in proving the main theorem.
\begin{theorem*}
For any two conditionally convergent series, there is some $A \sub \N$ sending both series to infinity.
\end{theorem*}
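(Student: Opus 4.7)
The plan is to produce $A$ by a two-step, sign-based refinement of $\N$. First, I would partition $\N$ by the sign of $b_n$, setting $S^+ = \{n : b_n \geq 0\}$ and $S^- = \{n : b_n < 0\}$. Conditional convergence of $\sum_n b_n$ forces $\sum_{n \in S^+} b_n = +\infty$ and $\sum_{n \in S^-} b_n = -\infty$, so any $A$ contained in a single half automatically satisfies $\sum_{n \in A} b_n = \pm\infty$. This reduces the task to choosing a subset of $S^+$ or $S^-$ on which the $a$-series also diverges.

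If $\sum_{n \in S^+} a_n$ or $\sum_{n \in S^-} a_n$ already equals $\pm \infty$, take $A$ to be that entire half and stop. Otherwise both restrictions converge to finite values; since $\sum_n |a_n| = \infty$, at least one of $\sum_{n \in S^+} |a_n|,\ \sum_{n \in S^-} |a_n|$ is infinite, and the corresponding restricted series $\sum_{n \in S} a_n$ is itself conditionally convergent. I would then refine within $S$ by the sign of $a_n$, letting $T^+ = \{n \in S : a_n \geq 0\}$ and $T^- = \{n \in S : a_n < 0\}$; conditional convergence of $\sum_{n \in S} a_n$ yields $\sum_{n \in T^+} a_n = +\infty$ and $\sum_{n \in T^-} a_n = -\infty$. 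Because $b_n$ has constant sign throughout $S$, the sums $\sum_{n \in T^+} b_n$ and $\sum_{n \in T^-} b_n$ have that same sign and add up to the divergent $\sum_{n \in S} b_n$, so at least one of them is $\pm\infty$. The corresponding $T^\pm$ is the desired $A$.

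The one delicate step is the middle transition: observing that when both restricted $a$-sums converge finitely, the divergence of $\sum_n|a_n|$ must still localize to one side of the partition, yielding a conditionally convergent $\sum_{n \in S} a_n$ that the inner refinement can exploit. The overall argument benefits from a free pass — once $A \subseteq S^\pm$, the $b$-sum is automatically $\pm\infty$ — and all the real work is concentrated in the inner split on the sign of $a_n$. I anticipate that for three series this free pass is lost (after splitting by the sign of a third series, neither restricted subseries need remain conditionally convergent), which is presumably why the main theorem is substantially harder than this warm-up.
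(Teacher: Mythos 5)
There is a genuine gap at the step that sets up the inner refinement. You assert that if neither $\sum_{n \in S^+} a_n$ nor $\sum_{n \in S^-} a_n$ equals $\pm\infty$, then ``both restrictions converge to finite values.'' This dichotomy is not automatic: a subseries of a conditionally convergent series can also diverge by oscillation, and nothing prevents $\sum_{n \in S^+} a_n$ from doing so, since the terms $a_n$ for $n \in S^+$ can have both signs in abundance. One can construct such an example by taking $a_n = (-1)^{n+1}/n$, choosing $S$ so that $|S \cap [1,N]| = N/2 + O(1)$ (so a suitable conditionally convergent $b$ with $\{n : b_n \geq 0\} = S$ exists) while $S$ alternates long runs of odd indices with long runs of even indices, forcing the partial sums of $\sum_{n \in S} a_n$ to oscillate. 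Your subsequent deduction that $\sum_{n \in S} a_n$ is ``conditionally convergent,'' which you use to split $S$ into $T^{\pm}$ with $\sum_{T^{+}} a_n = +\infty$ and $\sum_{T^{-}} a_n = -\infty$, therefore needs a different justification. Relatedly, your claim of a ``free pass'' --- that $A \sub S^{+}$ automatically gives $\sum_{n \in A} b_n = +\infty$ --- is false as stated (a subset of $S^+$ can easily have a finite $b$-sum), though you do not actually rely on it: in the final step you verify the $b$-sum over $T^{\pm}$ explicitly.

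The gap is fixable without changing your strategy, and that is worth noting. The relevant fact is: if $\sum_{n \in S}|a_n| = \infty$ and $\sum_{n \in S}a_n \neq \pm\infty$ (whether it converges or oscillates), then \emph{both} $\sum_{n \in T^{+}}a_n = +\infty$ and $\sum_{n \in T^{-}}a_n = -\infty$ --- for if say $\sum_{T^{+}}a_n$ were finite, then $\sum_{T^{-}}a_n$, being monotone, would either converge (making $\sum_{S}|a_n|$ finite, a contradiction) or diverge to $-\infty$ (making $\sum_{S}a_n = -\infty$, also excluded). So you should replace the dichotomy with: if one half already gives $\sum a_n = \pm\infty$, take that half; otherwise pick a half $S$ with $\sum_{n\in S}|a_n| = \infty$ (one exists, since $\sum|a_n|=\infty$), and the inner split still works. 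With that patch, your argument is correct, and it is organized somewhat differently from the paper's: the paper partitions $\N$ into four sets by the signs of both series simultaneously and runs an exhaustive case analysis, whereas you make an asymmetric two-stage refinement (first by the sign of $b_n$, then by the sign of $a_n$ within the chosen half), which reaches the same conclusion with less bookkeeping. Both hinge on the same underlying combinatorics of sign-based partitions, so the conceptual content is close.
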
 
\begin{proof}
Let $\sum_{n \in \N}a_n$ and $\sum_{n \in \N}b_n$ be conditionally convergent series. Partition $\N$ into four sets, depending on where these two series have their positive and non-positive terms:
$$A^{++} = \set{n}{a_n,b_n > 0} \qquad \qquad A^{-+} = \set{n}{a_n \leq 0, b_n > 0}$$
$$A^{+-} = \set{n}{a_n > 0,b_n \leq 0} \qquad \, A^{--} = \set{n}{a_n,b_n \leq 0}. \qquad\!\!\!\!$$
Consider the following eight series:

\begin{center}
\begin{tikzpicture}[xscale=.56,yscale=.45]

\node at (.5,4.06) {\small $\sum_{n \in A^{++}}a_n$,};
\node at (.5,2.94) {\small $\sum_{n \in A^{++}}b_n$};

\node at (.5,1.06) {\small $\sum_{n \in A^{+-}}a_n$,};
\node at (.5,-.06) {\small $\sum_{n \in A^{+-}}b_n$};

\node at (7.5,4.06) {\small $\sum_{n \in A^{-+}}a_n$,};
\node at (7.5,2.94) {\small $\sum_{n \in A^{-+}}b_n$};

\node at (7.5,1.06) {\small $\sum_{n \in A^{--}}a_n$,};
\node at (7.5,-.06) {\small $\sum_{n \in A^{--}}b_n$};

\node at (.5,5.5) {\footnotesize $a_n > 0$};
\node at (7.5,5.5) {\footnotesize $a_n \leq 0$};
\node at (-4,3.5) {\footnotesize $b_n > 0$};
\node at (-4,.5) {\footnotesize $b_n \leq 0$};

\draw[thin] (4,-1) -- (4,6);
\draw[thin] (11,-1) -- (11,6);
\draw[thin] (-3,-1) -- (-3,6);
\draw[thin] (-5,-1) -- (-5,5);
\draw[thin] (-5,2) -- (11,2);
\draw[thin] (-5,5) -- (11,5);
\draw[thin] (-5,-1) -- (11,-1);
\draw[thin] (-3,6) -- (11,6);

\end{tikzpicture}
\end{center}
\vspace{-2mm}

\begin{claim}
At least one of $\sum_{n \in A^{++}}a_n$ and $\sum_{n \in A^{+-}}a_n$ must sum to $\infty$, and if the other one does not then it must be absolutely convergent.
\end{claim}
\begin{proof}[Proof of claim:]
All the terms of $\sum_{n \in A^{++}}a_n$ and $\sum_{n \in A^{+-}}a_n$ are positive. This implies that each of these series must either sum to $\infty$, or else be absolutely convergent. But they cannot both be absolutely convergent: $\sum_{n \in A^{++} \cup A^{+-}}a_n = \infty$, because this is just the sum over all positive $a_n$, so splitting this series into the two subseries $\sum_{n \in A^{++}}a_n$ and $\sum_{n \in A^{+-}}a_n$ cannot result in two absolutely convergent series.
\end{proof}

In other words, this claim says that one of the $a_n$ subseries from the first column must sum to $\infty$, and if the other one doesn't then it must be absolutely convergent.  
A similar statement holds concernlng the $a_n$ subseries from the second column: at least one must sum to $-\infty$, and if the other one doesn't then it must be absolutely convergent. Likewise, a similar statement holds concernlng the two $b_n$ subseries from the first row, and to the two $b_n$ subseries from the second row.

Let us now try to find a set that sends both series to infinity. If one of the four sets $A^{++}, A^{+-}, A^{-+}, A^{--}$ works, then we are done. So let us suppose that none of these four sets sends both series to infinity.

If $A^{++}$ does not send both our series to infinity, then (by our claim) it must be because one or both of $\sum_{n \in A^{++}}a_n$ and $\sum_{n \in A^{++}}b_n$ are absolutely convergent. Let us suppose for now that both are absolutely convergent. By the claim above (and its variations involving the $b_n$), this implies 
\begin{itemize}
\item[$\circ$] $\sum_{n \in A^{+-}}a_n = \infty$ and $\sum_{n \in A^{-+}}b_n = \infty$.
\end{itemize}
As neither $A^{+-}$ nor $A^{-+}$ sends both series to infinity, this implies
\begin{itemize}
\item[$\circ$] $\sum_{n \in A^{+-}}b_n$ and $\sum_{n \in A^{-+}}a_n$ are absolutely convergent.
\end{itemize}
But, again using the claim above, this implies
\begin{itemize}
\item[$\circ$] $\sum_{n \in A^{--}}b_n = -\infty$ and $\sum_{n \in A^{--}}a_n = -\infty$,
\end{itemize}
contrary to our assumption that $A^{--}$ does not send both series to infinity.

This shows that exactly one of $\sum_{n \in A^{++}}a_n$ and $\sum_{n \in A^{++}}b_n$ is absolutely convergent, and the other sums to $\infty$. Using the claim above to reason as in the previous paragraph, this leads us to two symmetric possibilities:

\begin{center}
\begin{tikzpicture}[xscale=.6,yscale=.56]

\node at (.5,4.05) {\small $\sum_{n \in A^{++}}a_n = \infty$,};
\node at (.5,2.95) {\small $\sum_{n \in A^{++}}b_n$ abs. conv.};

\node at (.5,1.05) {\small $\sum_{n \in A^{+-}}a_n$ abs. conv.,};
\node at (.5,-.05) {\small $\sum_{n \in A^{+-}}b_n = -\infty$};

\node at (7.5,4.05) {\small $\sum_{n \in A^{-+}}a_n$ abs. conv.,};
\node at (7.5,2.95) {\small $\sum_{n \in A^{-+}}b_n = \infty$};

\node at (7.5,1.05) {\small $\sum_{n \in A^{--}}a_n = -\infty$,};
\node at (7.5,-.05) {\small $\sum_{n \in A^{--}}b_n$ abs. conv.};

\node at (.5,5.5) {\footnotesize $a_n > 0$};
\node at (7.5,5.5) {\footnotesize $a_n \leq 0$};
\node at (-4,3.5) {\footnotesize $b_n > 0$};
\node at (-4,.5) {\footnotesize $b_n \leq 0$};

\draw[thin] (4,-1) -- (4,6);
\draw[thin] (11,-1) -- (11,6);
\draw[thin] (-3,-1) -- (-3,6);
\draw[thin] (-5,-1) -- (-5,5);
\draw[thin] (-5,2) -- (11,2);
\draw[thin] (-5,5) -- (11,5);
\draw[thin] (-5,-1) -- (11,-1);
\draw[thin] (-3,6) -- (11,6);

\end{tikzpicture}
\end{center}

\begin{center}
\begin{tikzpicture}[xscale=.6,yscale=.56]

\node at (.5,4.05) {\small $\sum_{n \in A^{++}}a_n$ abs. conv.,};
\node at (.5,2.95) {\small $\sum_{n \in A^{++}}b_n = \infty$};

\node at (.5,1.05) {\small $\sum_{n \in A^{+-}}a_n = \infty$,};
\node at (.5,-.05) {\small $\sum_{n \in A^{+-}}b_n$ abs. conv.};

\node at (7.5,4.05) {\small $\sum_{n \in A^{-+}}a_n = -\infty$,};
\node at (7.5,2.95) {\small $\sum_{n \in A^{-+}}b_n$ abs. conv.};

\node at (7.5,1.05) {\small $\sum_{n \in A^{--}}a_n$ abs. conv.,};
\node at (7.5,-.05) {\small $\sum_{n \in A^{--}}b_n = -\infty$};

\node at (.5,5.5) {\footnotesize $a_n > 0$};
\node at (7.5,5.5) {\footnotesize $a_n \leq 0$};
\node at (-4,3.5) {\footnotesize $b_n > 0$};
\node at (-4,.5) {\footnotesize $b_n \leq 0$};

\draw[thin] (4,-1) -- (4,6);
\draw[thin] (11,-1) -- (11,6);
\draw[thin] (-3,-1) -- (-3,6);
\draw[thin] (-5,-1) -- (-5,5);
\draw[thin] (-5,2) -- (11,2);
\draw[thin] (-5,5) -- (11,5);
\draw[thin] (-5,-1) -- (11,-1);
\draw[thin] (-3,6) -- (11,6);

\end{tikzpicture}
\end{center}

\noindent In either case, it is easy to see that $A = A^{++} \cup A^{+-}$ sends both our series to infinity.
\end{proof}

This proof, though a little space-consuming, is driven by a very simple idea. The strategy of the proof can be summed up in a sentence: First partition $\N$ into $4$ sets according to where our two series are positive/non-positive, then show that if one of these $4$ sets does not already send both our series to infinity, then the union of two of them does.

This ``partition-and-union'' strategy does not work for three series. Given three conditionally convergent series, one may of course partition $\N$ into $8$ sets according to where each of the three series have their positive and non-positive terms. By combining these sets in various ways, we obtain up to $255$ nonempty subsets of $\N$. If the partition-and-union strategy from the previous paragraph is to work, then one of these $255$ sets should do the job of sending all three series to infinity simultaneously. But this is not always so.
Consider the following three conditionally convergent series:

\vspace{-3mm}
\renewcommand*{\arraystretch}{2}
$$\begin{matrix}

1 & \textstyle - \ \frac{1}{2} & \textstyle + \ \frac{1}{3} & \textstyle - \ \frac{1}{4} & \textstyle + \ \frac{1}{5} & \textstyle - \ \frac{1}{6} & \textstyle + \ \frac{1}{7} & \textstyle - \ \frac{1}{8} & \textstyle + \ \frac{1}{9} & \textstyle - \ \frac{1}{10} \ \  \dots \\

1 & \textstyle + \ 0 & \textstyle - \ \frac{1}{3} & \textstyle + \ 0 & \textstyle + \ \frac{1}{5} & \textstyle + \ 0 & \textstyle - \ \frac{1}{7} & \textstyle + \ 0 & \textstyle + \ \frac{1}{9} & \textstyle + \ \, 0 \ \ \ \dots \\

0 & \textstyle + \ \frac{1}{2} & \textstyle + \ 0 & \textstyle - \ \frac{1}{4} & \textstyle + \ 0 & \textstyle + \ \frac{1}{6} & \textstyle + \ 0 & \textstyle - \ \frac{1}{8} & \textstyle + \ 0 & \textstyle + \ \frac{1}{10} \ \  \dots

\end{matrix}$$
\vspace{.25mm}

If we partition $\N$ into sets according to where the above three series are positive or not, we obtain only four sets, not eight, namely the mod-$4$ equivalence classes. This partition generates only $15$ nonempty subsets of $\N$, rather than $255$, and one can check by hand that none of these $15$ sets succeeds in sending all three series to infinity simultaneously.

Part of what made the proof so simple for two series is that we never had to deal with a subseries $\sum_{n \in A}a_n$ having ``many'' terms of both signs. Again, for three series this is no longer the case. 
In our example above, it is not too difficult to see that if $A \sub \N$ sends sends the second series to infinity, then $A$ must contain enough odd numbers to make the sum of the positive terms from the first series infinite:
$$\textstyle \sum_{\substack{n \in A, \\ n \text{ odd}}} \frac{(-1)^{n+1}}{n} \,=\, \infty.$$
Similarly, if $A$ sends the third series to infinity then $A$ must contain enough even numbers so that
$$\textstyle \sum_{\substack{n \in A, \\ n \text{ even}}} \frac{(-1)^{n+1}}{n} \,=\, -\infty.$$
Thus, in order to send all three series to infinity, $A$ must contain ``many'' positive and negative terms from the first series. 

This complicates the proof of our main theorem. It is not always possible to find some $A \sub \N$ on which each of our series has everywhere (or almost everywhere) the same sign. Instead, there is a balancing act that must be achieved, and the set $A$ must, for at least one of our three series, contain ``many'' positive and negative terms together, but with one sign consistently pulling harder than the other in their infinite game of tug-of-war.

\section{Sending three series to infinity}\label{sec:main}

In this section we prove the main theorem, beginning with a sequence of definitions and lemmas. 

\begin{lemma}\label{lem:partition}
Let $\sum_{n \in \N}a^1_n$, $\sum_{n \in \N}a^2_n$, and $\sum_{n \in \N}a^3_n$ be conditionally convergent series. Then there is a partition $\P$ of $\N$ (into at most $8$ sets) such that for each $i \in \{1,2,3\}$, 
\begin{itemize}
\item[\footnotesize \raisebox{.15mm}{$\bullet$}] there is some $A \in \P$ such that $\sum_{i \in A}a^i_n = \infty$,
\item[\footnotesize \raisebox{.15mm}{$\bullet$}] there is some $A \in \P$ such that $\sum_{i \in A}a^i_n = -\infty$, and
\item[\footnotesize \raisebox{.15mm}{$\bullet$}] for every $A \in \P$, the terms of $\sum_{i \in A}a^i_n$ all have the same sign.
\end{itemize}
\end{lemma}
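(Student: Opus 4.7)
The plan is to let $\P$ be the partition of $\N$ induced by the joint sign pattern of the three series. Concretely, for each $n \in \N$ record the triple $(\varepsilon_1(n),\varepsilon_2(n),\varepsilon_3(n)) \in \{+,-\}^3$, where $\varepsilon_i(n) = +$ if $a^i_n > 0$ and $\varepsilon_i(n) = -$ if $a^i_n \leq 0$, and put two natural numbers in the same block of $\P$ iff they carry the same triple. This yields at most $8$ nonempty blocks, and by construction every block has constant sign for each of the three series. That immediately handles the third bullet, with the customary convention of treating $0$ as sharing a sign with the negatives.

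For the first two bullets, fix $i \in \{1,2,3\}$ and look at the four blocks on which $\varepsilon_i = +$. Their union is $\{n : a^i_n > 0\}$, whose $a^i$-sum is $+\infty$ by conditional convergence of $\sum_{n \in \N} a^i_n$. Since each of these four blocks contains only positive $a^i$-terms, each has a nonnegative (possibly $+\infty$) $a^i$-sum, and four nonnegative extended reals can sum to $+\infty$ only when at least one of them equals $+\infty$. Hence some block in $\P$ sends $\sum a^i_n$ to $+\infty$. The dual argument applied to the four blocks on which $\varepsilon_i = -$, whose union has $a^i$-sum equal to $-\infty$, produces a block sending $\sum a^i_n$ to $-\infty$.

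I do not anticipate any real obstacle here; the lemma is essentially a bookkeeping device that isolates the coarsest partition on which each series has a uniform sign, after which both the ``same sign'' requirement and the ``$\pm\infty$ witnesses'' fall out by a pigeonhole argument from conditional convergence. The only mild subtlety is the treatment of terms $a^i_n = 0$, and the asymmetric convention above (lumping zeros with the non-positives) makes this a non-issue: a block whose $a^i$-terms are all $0$ has $a^i$-sum equal to $0$, so it cannot spuriously play the role of either $+\infty$ or $-\infty$ witness. The genuine difficulty of the main theorem will lie not in producing this partition but in combining and further refining the pieces that this lemma delivers.
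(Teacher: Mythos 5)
Your proof is correct and follows essentially the same route as the paper's: the paper likewise partitions $\N$ into (at most) $8$ blocks by the joint sign pattern of the three series and invokes the same pigeonhole argument, that the union of the positive-sign blocks for series $i$ sums to $+\infty$ (and dually for the negative ones), so at least one block must. The paper simply states this tersely by reference to the analogous Claim in the two-series warm-up; your write-up spells out the same reasoning explicitly.
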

\begin{proof}
This is exactly as in the previous section: just partition $\N$ into $8$ sets according to where the terms of each series are positive/non-positive, and argue as in the Claim from the previous proof.
\end{proof}

\begin{definition}
Suppose $\sum_{n \in \N}a_n$ is a conditionally convergent series, and let $A \sub \N$. We say that $A$ is \emph{tame with respect to $\sum_{n \in \N}a_n$} if at most one of the subseries 
$$\textstyle \sum_{n \in A,\, a_n > 0}\,a_n \qquad \text{and} \qquad \textstyle \sum_{n \in A,\, a_n \leq 0}\,a_n$$
fails to be absolutely convergent. When the series is clear from context, we may say simply that $A$ is tame.
Given three conditionally convergent series $\sum_{n \in \N}a^1_n$, $\sum_{n \in \N}a^2_n$, and $\sum_{n \in \N}a^3_n$, we say that $A \sub \N$ is \emph{tame} if it is tame with respect to each of the three series.
\end{definition}

In other words, $A \sub \N$ is tame with respect to $\sum_{n \in \N}a_n$ provided that it does not contain ``many'' positive and negative terms at the same time. Notice that tame sets are precisely the kind that might arise when applying the partition-and-union strategy from the previous section. The following three lemmas are easy exercises related to tameness and we omit the proofs, but we state the lemmas explicitly because they play an important part in what follows:

\begin{lemma}\label{lem:full}
Let $\sum_{n \in \N}a_n$ be a conditionally convergent series and $A \sub \N$. If all the terms of $\sum_{n \in A}a_n$ have the same sign, then $A$ is tame.
\end{lemma}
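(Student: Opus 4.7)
The plan is to unpack the definition of tameness and observe that the hypothesis forces one of the two relevant sub-subseries to be trivial. Recall that for $A$ to be tame with respect to $\sum_{n \in \N}a_n$, at most one of
\[
\textstyle \sum_{n \in A,\, a_n > 0} a_n \qquad \text{and} \qquad \sum_{n \in A,\, a_n \leq 0} a_n
\]
is allowed to fail to be absolutely convergent. So to prove the lemma, I only need to exhibit, under the hypothesis, one of these two subseries that is absolutely convergent.

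I would split into two cases according to the common sign of the terms of $\sum_{n \in A}a_n$. If all terms are strictly positive, then $\{n \in A : a_n \leq 0\} = \emptyset$, so $\sum_{n \in A,\, a_n \leq 0} a_n$ is an empty sum, which is absolutely convergent (vacuously). Symmetrically, if all terms are non-positive, then $\{n \in A : a_n > 0\} = \emptyset$, so $\sum_{n \in A,\, a_n > 0} a_n$ is absolutely convergent. In either case, at least one of the two subseries is absolutely convergent, so at most one can fail to be absolutely convergent, which is precisely the definition of $A$ being tame with respect to $\sum_{n \in \N}a_n$.

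There is no real obstacle here; the lemma is essentially a tautology once the definition of tameness is parsed. The only minor point to notice is the asymmetric convention $a_n > 0$ versus $a_n \leq 0$ in the definition: whichever sign the terms of $\sum_{n \in A}a_n$ all share, the other index set is empty, so the mild asymmetry does not cause any trouble. Since the paper explicitly says these lemmas are easy exercises and omits their proofs, I would write the argument in one or two sentences, matching the informal remark that tame sets are exactly those that do not contain ``many'' positive and negative terms simultaneously.
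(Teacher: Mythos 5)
Your proof is correct, and since the paper explicitly omits the proof as an easy exercise, your one-line unpacking of the definition is exactly the intended argument: under the hypothesis one of the two index sets $\{n\in A : a_n>0\}$ or $\{n\in A: a_n\leq 0\}$ is empty (or contributes only zero terms), so the corresponding subseries is trivially absolutely convergent, and tameness follows.
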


\begin{lemma}\label{lem:subset}
Every subset of a tame set is tame.
\end{lemma}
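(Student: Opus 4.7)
The plan is to reduce everything to a standard fact: a subseries of an absolutely convergent series (indeed, of any series of same-sign terms with bounded partial sums) is itself absolutely convergent. With that observation in hand, the lemma for a single series should follow immediately, and the three-series version follows componentwise.

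More concretely, fix a conditionally convergent series $\sum_{n \in \N} a_n$, let $A \sub \N$ be tame with respect to it, and let $B \sub A$. I would first note that the subseries
\[
\textstyle \sum_{n \in B,\, a_n > 0} a_n \qquad \text{and} \qquad \sum_{n \in B,\, a_n \leq 0} a_n
\]
are obtained from the corresponding subseries over $A$ by restricting the index set further. Since each of these has terms all of one sign, absolute convergence coincides with convergence, which in turn is equivalent to the partial sums being bounded (above, for the positive-term series; below in absolute value, for the non-positive one). Passing to a sub-index set can only decrease these partial sums, so absolute convergence is inherited from $A$ to $B$ on each side separately.

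The tameness of $A$ says that at most one of the two sign-restricted series over $A$ fails to be absolutely convergent. By the preceding paragraph, if the positive-term series over $A$ is absolutely convergent then so is the positive-term series over $B$, and likewise on the non-positive side. Hence at most one of the two sign-restricted series over $B$ can fail to be absolutely convergent, i.e.\ $B$ is tame with respect to $\sum_{n \in \N} a_n$.

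Finally, for the three-series definition, if $A$ is tame with respect to each of $\sum_{n \in \N} a^1_n$, $\sum_{n \in \N} a^2_n$, and $\sum_{n \in \N} a^3_n$, then applying the single-series argument to each $i \in \{1,2,3\}$ shows that $B$ is tame with respect to each of these series, and hence tame in the sense of the preceding definition. There is no real obstacle here; the lemma is essentially just the hereditary nature of absolute convergence under passage to subseries, applied separately on the positive and non-positive parts.
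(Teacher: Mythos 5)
Your proof is correct, and since the paper explicitly omits the proof of this lemma as an easy exercise, there is no competing argument to compare against; what you wrote is exactly the natural argument the author intends. The key observation — that a subseries of an absolutely convergent (equivalently, for same-sign terms, bounded-partial-sum) series is again absolutely convergent, applied separately to the positive and non-positive parts — is precisely why the lemma is considered routine.
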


In the statement of the following two lemmas, we use the usual conventions for extended real numbers, so that the sum $\sum_{n \in A}a_n + \sum_{n \in B}a_n$ is well-defined unless either $\sum_{n \in A}a_n = \infty$ and $\sum_{n \in B}a_n = -\infty$, or
$\sum_{n \in A}a_n = -\infty$ and $\sum_{n \in B}a_n = \infty$. 

\begin{lemma}\label{lem:unionclosed}
Let $\sum_{n \in \N}a_n$ be a conditionally convergent series, and let $A,B \sub \N$ be tame. If $\sum_{n \in A}a_n + \sum_{n \in B}a_n$ is well-defined, then $A \cup B$ is tame and
$\sum_{n \in A}a_n + \sum_{n \in B}a_n = \sum_{n \in A \cup B}a_n.$
\end{lemma}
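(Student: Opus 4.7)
My plan is to decompose each tame set by the sign of its terms, use the tameness hypotheses to control which pieces can be infinite, and then invoke the well-definedness hypothesis to rule out the only way $A \cup B$ could fail to be tame. For any tame set $C \sub \N$, write $P_C = \{n \in C : a_n > 0\}$ and $N_C = \{n \in C : a_n \leq 0\}$. Tameness says at most one of $\sum_{n \in P_C} a_n$ and $\sum_{n \in N_C} a_n$ fails to converge absolutely; since each of these has constant-sign terms, the non-absolutely-convergent one (if any) must equal $+\infty$ or $-\infty$, respectively. In every case $\sum_{n \in C} a_n = \sum_{n \in P_C}a_n + \sum_{n \in N_C}a_n$ is a well-defined extended real.

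Without loss of generality I may assume $A$ and $B$ are disjoint, since in general I can replace $B$ by $B \setminus A$, which is tame by Lemma \ref{lem:subset}, without changing $A \cup B$. Then $P_{A\cup B} = P_A \sqcup P_B$ and $N_{A\cup B} = N_A \sqcup N_B$. To show $A \cup B$ is tame, suppose for contradiction that both $\sum_{n \in P_{A\cup B}}a_n$ and $\sum_{n \in N_{A\cup B}}a_n$ fail to converge absolutely; then, being of constant sign, they must equal $+\infty$ and $-\infty$ respectively. Tameness of $A$ means at most one of $\sum_{P_A}a_n, \sum_{N_A}a_n$ is non-absolutely-convergent, so after possibly swapping the roles of $A$ and $B$ I may assume $\sum_{P_A}a_n = +\infty$ while $\sum_{N_A}a_n$ is absolutely convergent. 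Then $\sum_{N_{A\cup B}}a_n = -\infty$ forces $\sum_{N_B}a_n = -\infty$, and tameness of $B$ forces $\sum_{P_B}a_n$ absolutely convergent. Combining gives $\sum_{n \in A}a_n = +\infty$ and $\sum_{n \in B}a_n = -\infty$, contradicting that $\sum_A a_n + \sum_B a_n$ is well-defined. Hence $A \cup B$ is tame.

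For the sum formula, I would regroup in the extended reals:
\[
\sum_{n \in A \cup B} a_n \,=\, \sum_{n \in P_{A\cup B}} a_n + \sum_{n \in N_{A\cup B}} a_n \,=\, \Bigl(\sum_{P_A}a_n + \sum_{P_B}a_n\Bigr) + \Bigl(\sum_{N_A}a_n + \sum_{N_B}a_n\Bigr),
\]
and then commute the two middle terms to obtain $\sum_A a_n + \sum_B a_n$. Every regrouping here is legitimate because tameness of $A$, of $B$, and of $A \cup B$ (just established) together guarantee that no sub-combination produces a $+\infty + (-\infty)$.

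The main obstacle, I expect, is not the mathematical content — which is really just a book-keeping exercise about signed series — but carefully tracking which of the many subseries in play could be infinite, and verifying that tameness together with the well-definedness hypothesis excludes every $+\infty + (-\infty)$ that might otherwise appear during the regrouping. Once that is handled, the contradiction argument and the telescoping of the sum fall out immediately.
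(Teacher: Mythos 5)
Your central case analysis for the disjoint situation is correct (and is the natural elementary argument: decompose each tame set by sign of its terms, use tameness to control which pieces can be infinite, and invoke well-definedness to rule out $+\infty$ and $-\infty$ coexisting). But the opening reduction is where the argument has a gap. You reduce to $A \cap B = \emptyset$ by replacing $B$ with $B \setminus A$, justifying this because the replacement ``does not change $A \cup B$.'' That is fine for the tameness conclusion, which depends only on $A \cup B$. It is not fine for the sum identity: after the replacement you end up proving
$\sum_{n\in A}a_n + \sum_{n \in B\setminus A}a_n = \sum_{n\in A\cup B}a_n$,
which differs from the claimed identity $\sum_{n\in A}a_n + \sum_{n\in B}a_n = \sum_{n\in A\cup B}a_n$ by $\sum_{n\in A\cap B}a_n$, and this quantity need not be zero. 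So the WLOG step, as stated, does not reduce the general case to the disjoint one.

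In fact the lemma is false as literally stated when $A$ and $B$ overlap and both sums are finite: for the alternating harmonic series take $A = B = \{1\}$, both tame, with $\sum_{n\in A}a_n + \sum_{n\in B}a_n = 2$ but $\sum_{n\in A\cup B}a_n = 1$. The identity does hold when $A \cap B = \emptyset$, and also whenever at least one of $\sum_{n\in A}a_n$, $\sum_{n\in B}a_n$ is $\pm\infty$ (there the overlap $A\cap B$ is a tame subset of a tame set with finite sum, and adding or deleting it is absorbed by the infinity; this requires a short extra argument you have not given). The paper only ever invokes the lemma for disjoint sets, so under the plausible reading that disjointness is an unstated hypothesis, your proof is essentially complete, and your final regrouping step is sound: once $A \cup B$ is known to be tame, among $\sum_{P_A}a_n,\, \sum_{P_B}a_n,\, \sum_{N_A}a_n,\, \sum_{N_B}a_n$ one can never have both a $+\infty$ and a $-\infty$, so the four-term sum can be reassociated freely. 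But as a proof of the lemma exactly as written, the reduction to disjoint sets needs to be either dropped or replaced by a genuine argument that the overlap does not matter.
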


We now turn to the question of what combinatorial obstacles might arise in attempting to apply the partition-and-union strategy to three series instead of two. 

\begin{definition}
Let $\mathrm{Fn}(3,2)$ denote the set of all functions from a nonempty subset of $\{1,2,3\}$ into the $2$-element set $\{p,n\}$. 
\begin{itemize}
\item[\footnotesize \raisebox{.15mm}{$\bullet$}] We say that $\F \sub \mathrm{Fn}(3,2)$ is \emph{full} provided that for every $x \in \{1,2,3\}$ and $y \in \{p,n\}$, there is some $f \in \F$ with $f(x) = y$.
\item[\footnotesize \raisebox{.15mm}{$\bullet$}] We say that $\F \sub \mathrm{Fn}(3,2)$ is \emph{union-closed} provided that for every $f,g \in \F$, if $f$ and $g$ are compatible (which means that they take the same value on any member of $\mathrm{dom}(f) \cap \mathrm{dom}(g)$) then $f \cup g \in \F$.
\end{itemize}
\end{definition}

The idea here is that a member of $\mathrm{Fn}(3,2)$ can represent the effect of a tame set on our three series. More precisely:

\begin{definition}
Let $\sum_{n \in \N}a^1_n$, $\sum_{n \in \N}a^2_n$, and $\sum_{n \in \N}a^3_n$ be conditionally convergent series. Given $A \sub \N$, let $\phi_A$ denote the function from some subset of $\{1,2,3\}$ to $\{p,n\}$ defined as follows:
$$\phi_A(i) = 
\begin{cases}
p &\text{ if } \sum_{n \in A}a^i_n = \infty, \\
n &\text{ if } \sum_{n \in A}a^i_n = -\infty,
\end{cases}$$
and otherwise $i \notin \mathrm{dom}(\phi_A)$.
\end{definition}

\begin{lemma}\label{lem:tameness}
Let $\sum_{n \in \N}a^1_n$, $\sum_{n \in \N}a^2_n$, and $\sum_{n \in \N}a^3_n$ be conditionally convergent series. Then
$$\F = \textstyle \set{\phi_A}{A \sub \N \text{ is tame and }\phi_A \text{ is not the empty function}}$$
is a full, union-closed subset of $\mathrm{Fn}(3,2)$.
\end{lemma}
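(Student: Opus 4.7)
The plan is to verify the two properties of $\F$ separately. \textbf{Fullness} is essentially immediate from Lemma~\ref{lem:partition}. Apply that lemma to obtain a partition $\P$ of $\N$ such that, restricted to any $A \in \P$, the terms of each of the three series have constant sign. By Lemma~\ref{lem:full}, each such $A$ is tame with respect to every series, hence tame. For each $i \in \{1,2,3\}$, Lemma~\ref{lem:partition} supplies some $A_+, A_- \in \P$ with $\sum_{n \in A_+}a^i_n = \infty$ and $\sum_{n \in A_-}a^i_n = -\infty$, so $\phi_{A_+}(i) = p$ and $\phi_{A_-}(i) = n$, and these functions are both non-empty. Letting $i$ range over $\{1,2,3\}$ realizes every $(x,y) \in \{1,2,3\} \times \{p,n\}$.

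\textbf{Union-closedness} is the substantive part. Given compatible $\phi_A, \phi_B \in \F$, the goal is to show $\phi_{A \cup B} = \phi_A \cup \phi_B$; since the right-hand side is a non-empty function, this will put it in $\F$. Fix a coordinate $i \in \{1,2,3\}$ and, for any $X \sub \N$, write
$$P_X = \sum_{\substack{n \in X \\ a^i_n > 0}} a^i_n \qquad \text{and} \qquad N_X = \sum_{\substack{n \in X \\ a^i_n \leq 0}} a^i_n.$$
The positive terms indexed by $A \cup B$ are exactly the union of those indexed by $A$ with those indexed by $B$ (and similarly for negative terms), yielding $\max(P_A,P_B) \leq P_{A \cup B} \leq P_A + P_B$ and the analogous inequalities for $N$. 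Consequently $P_{A \cup B} = \infty$ iff $P_A = \infty$ or $P_B = \infty$, and the same for $N$.

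Now I combine tameness of $A,B$ with compatibility to finish. A short case analysis on whether $i \in \mathrm{dom}(\phi_A)$ and whether $i \in \mathrm{dom}(\phi_B)$---using that $\phi_X(i) = p$ forces $P_X = \infty$ and $N_X$ finite, and $\phi_X(i) = n$ forces the reverse---shows that compatibility rules out the mixed configurations and leaves us with either both $N_A, N_B$ finite or both $P_A, P_B$ finite. Either way, at most one of $P_{A \cup B}, N_{A \cup B}$ is infinite, so $A \cup B$ is tame with respect to series $i$; since $i$ was arbitrary, $A \cup B$ is tame. Moreover, $\phi_{A \cup B}(i) = p$ iff $P_{A \cup B} = \infty$ iff $\phi_A(i) = p$ or $\phi_B(i) = p$ iff $(\phi_A \cup \phi_B)(i) = p$, and symmetrically for $n$; this forces $\phi_{A \cup B} = \phi_A \cup \phi_B$. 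The main subtlety to navigate is that $A$ and $B$ need not be disjoint, so Lemma~\ref{lem:unionclosed} does not directly deliver a numerical additivity $\sum_{n \in A} a^i_n + \sum_{n \in B} a^i_n = \sum_{n \in A \cup B} a^i_n$; tracking the positive and negative parts separately sidesteps this issue, because $P_X$ and $N_X$ behave well under union even when $A$ and $B$ overlap.
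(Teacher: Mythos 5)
Your proof is correct, and the fullness half is exactly the paper's argument (cite Lemmas~\ref{lem:partition} and~\ref{lem:full}). For union-closedness the paper simply writes ``Lemma~\ref{lem:unionclosed} implies that $\F$ is union-closed'' and stops; you have, in effect, spelled out what that citation is leaving implicit, and in doing so you have noticed something real. As stated, Lemma~\ref{lem:unionclosed} asserts the numerical identity $\sum_{n\in A}a_n+\sum_{n\in B}a_n=\sum_{n\in A\cup B}a_n$ with no disjointness hypothesis, which is false for overlapping $A,B$ (take $A=B$ with $\sum_{n\in A}a_n$ finite and nonzero). In every other place the paper invokes Lemma~\ref{lem:unionclosed} the two sets are disjoint, but here the tame sets $A,B$ defining compatible $\phi_A,\phi_B$ need not be. Your fix --- tracking the positive part $P_X$ and negative part $N_X$ separately, using $\max(P_A,P_B)\le P_{A\cup B}\le P_A+P_B$ and the analogous bounds for $N$ --- is the right way to extract the conclusion that actually survives overlap: $A\cup B$ is tame and $\phi_{A\cup B}=\phi_A\cup\phi_B$, which (together with non-emptiness) is all that union-closedness of $\F$ requires. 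So your route is the same in spirit but more careful; what it buys is a proof that does not lean on the (strictly speaking false for overlapping sets) additivity clause of Lemma~\ref{lem:unionclosed}, only on its tameness clause, which does hold in the overlapping case. A reader reconstructing the paper's one-line proof would need exactly your observation to make it airtight.
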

\begin{proof}
It is clear from our definitions that $\F \sub \mathrm{Fn}(3,2)$. 
Taken together, Lemmas~\ref{lem:partition} and \ref{lem:full} imply that $\F$ is full. Similarly, Lemma~\ref{lem:unionclosed} implies that $\F$ is union-closed.
\end{proof}

In what follows, we shall sometimes represent functions as sets of ordered pairs, and sometimes we shall represent functions and sets of functions as pictures. For example, 

\vspace{-1mm}
\begin{center}
\begin{tikzpicture}[xscale=.9,yscale=.9]

\draw[rounded corners,thick] (0,0) rectangle (.5,2) {};
\draw[rounded corners,thick] (1.25,0) rectangle (1.75,2) {};
\draw[rounded corners,thick] (2.5,0) rectangle (3,2) {};
\node at (.25,.25) {\scriptsize $n$};
\node at (.25,1.75) {\scriptsize $p$};
\node at (1.5,.25) {\scriptsize $n$};
\node at (1.5,1.75) {\scriptsize $p$};
\node at (2.75,.25) {\scriptsize $n$};
\node at (2.75,1.75) {\scriptsize $p$};
\node at (.25,1) {\small $1$};
\node at (1.5,1) {\small $2$};
\node at (2.75,1) {\small $3$};

\draw[blue] (.25,1.75) circle (.5cm);
\draw[blue] (1.5,1.75) ellipse (1.65cm and .34cm);
\draw[blue] (2.125,.25) ellipse (1cm and .28cm);

\end{tikzpicture}
\end{center}

\noindent represents the subset of $\mathrm{Fn}(3,2)$ containing the three functions $\{(1,p)\}$, $\{(1,p),(2,p),(3,p)\}$, and $\{(2,n),(3,n)\}$.

\begin{definition}
Two sets $\F,\G \sub \mathrm{Fn}(3,2)$ are \emph{equivalent} if there is a permutation $\pi$ of $\{1,2,3\}$ and permutations $\tau_1$, $\tau_2$, and $\tau_3$ of $\{p,n\}$ such that
$$g \in \G  \quad \Leftrightarrow \quad g(x) = \tau_x \circ f \circ \pi (x) \ \text{ for some } f \in \F.$$
\end{definition}

The intuition here is that two subsets of $\mathrm{Fn}(3,2)$ are equivalent if they have the same ``shape.'' For example, consider the two sets of functions pictured below:

\begin{center}
\begin{tikzpicture}[xscale=.9,yscale=.9]

\draw[rounded corners,thick] (-1,3.206) rectangle (1,3.706) {};
\node at (-.75,3.45) {\scriptsize $n$};
\node at (.75,3.45) {\scriptsize $p$};
\draw[rotate=30,rounded corners,thick] (-.25,1) rectangle (.25,3) {};
\node at (-1.375,2.382) {\scriptsize $p$};
\node at (1.375,2.382) {\scriptsize $n$};
\node at (-.625,1.083) {\scriptsize $n$};
\node at (.625,1.083) {\scriptsize $p$};
\draw[rotate=-30,rounded corners,thick] (-.25,1) rectangle (.25,3) {};
\node at (0,3.456) {\small $1$};
\node at (-1,1.733) {\small $2$};
\node at (1,1.733) {\small $3$};

\draw[blue] (0,1.083) ellipse (.97cm and .28cm);
\draw[rotate=-60,blue] (-2,2.382) ellipse (.97cm and .28cm);
\draw[rotate=60,blue] (2,2.382) ellipse (.97cm and .28cm);

\draw[rounded corners,thick] (5,3.206) rectangle (7,3.706) {};
\node at (5.25,3.45) {\scriptsize $p$};
\node at (6.75,3.45) {\scriptsize $n$};
\draw[rotate=30,rounded corners,thick] (4.946,-2) rectangle (5.446,0) {};
\node at (4.625,2.382) {\scriptsize $p$};
\node at (7.375,2.382) {\scriptsize $n$};
\node at (5.375,1.083) {\scriptsize $n$};
\node at (6.625,1.083) {\scriptsize $p$};
\draw[rotate=-30,rounded corners,thick] (4.946,4) rectangle (5.446,6) {};
\node at (6,3.456) {\small $2$};
\node at (5,1.733) {\small $3$};
\node at (7,1.733) {\small $1$};

\draw[blue] (6,1.083) ellipse (.97cm and .28cm);
\draw[rotate=-60,blue] (1,7.578) ellipse (.97cm and .28cm);
\draw[rotate=60,blue] (5,-2.814) ellipse (.97cm and .28cm);

\end{tikzpicture}
\end{center}

\noindent These two pictures represent different subsets of $\mathrm{Fn}(3,2)$. But these two sets of functions are equivalent (in the sense defined above), precisely because their pictures can be changed from one to the other simply by relabelling. When we have the need to represent a family of functions pictorially in what follows, we shall sometimes draw pictures with (some) labels missing, as a reminder that we really only care about sets of functions up to equivalence.

\begin{lemma}\label{lem:twotypes}
Up to equivalence, there are exactly four full, union-closed subsets of $\mathrm{Fn}(3,2)$ that do not include a function with domain $\{1,2,3\}$. These four sets of functions come in two types:

\begin{center}
\begin{tikzpicture}

\node at (2,3.3) {\small Type $1$};
\draw[rounded corners,thick] (-1,3.206) rectangle (1,3.706) {};
\node at (-.75,3.45) {\small $\bullet$};
\node at (.75,3.45) {\small $\bullet$};
\draw[rotate=30,rounded corners,thick] (-.25,1) rectangle (.25,3) {};
\node at (-1.375,2.382) {\small $\bullet$};
\node at (1.375,2.382) {\small $\bullet$};
\node at (-.625,1.083) {\small $\bullet$};
\node at (.625,1.083) {\small $\bullet$};
\draw[rotate=-30,rounded corners,thick] (-.25,1) rectangle (.25,3) {};

\draw[blue] (0,1.083) ellipse (.97cm and .28cm);
\draw[rotate=-60,blue] (-2,2.382) ellipse (.97cm and .28cm);
\draw[rotate=60,blue] (2,2.382) ellipse (.97cm and .28cm);

\node at (6.75,3.3) {\small Type $2$};
\draw[rounded corners,thick] (5.75,2) rectangle (7.75,2.5) {};
\node at (6,2.25) {\small $\bullet$};
\node at (7.5,2.25) {\small $\bullet$};
\draw[rounded corners,thick] (4.451,1.25) rectangle (4.951,3.25) {};
\node at (4.701,1.5) {\small $\bullet$};
\node at (4.701,3) {\small $\bullet$};
\draw[rounded corners,thick] (8.549,1.25) rectangle (9.049,3.25) {};
\node at (8.799,1.5) {\small $\bullet$};
\node at (8.799,3) {\small $\bullet$};

\draw[rotate=30,blue] (5.57,-1.042) ellipse (1cm and .28cm);
\draw[rotate=-30,blue] (3.318,4.95) ellipse (1cm and .28cm);
\draw[rotate=30,blue] (8.379,-1.792) ellipse (1cm and .28cm);
\draw[rotate=-30,blue] (6.123,5.7) ellipse (1cm and .28cm);
\draw[blue,dashed] (6,2.25) circle (.4cm);
\draw[blue,dashed] (7.5,2.25) circle (.4cm);

\end{tikzpicture}
\end{center}

\noindent Type $1$ consists of just one set of functions (up to equivalence), and Type $2$ consists of three: each Type 2 set contains all four of the functions indicated by ellipses, but non-equivalent sets are obtained by including zero, one, or both of the functions indicated by dashed circles.
\end{lemma}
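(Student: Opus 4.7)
The plan is a case analysis based on what small-domain functions appear in $\F$. For $i \in \{1,2,3\}$, write $S_i = \set{v \in \{p,n\}}{\{(i,v)\} \in \F}$ and, for each pair $\{i,j\}$, $D_{ij} = \set{f \in \F}{\mathrm{dom}(f) = \{i,j\}}$. Since $\F$ contains no function with domain $\{1,2,3\}$, union-closure yields the master constraint: no two compatible members of $\F$ can have domains whose union is $\{1,2,3\}$. Three consequences of this drive everything: (a) if $S_i \neq \0$ then $D_{jk} = \0$, where $\{j,k\} = \{1,2,3\} \setminus \{i\}$, since any $f \in D_{jk}$ would be compatible with the singleton at $i$; (b) at most one $S_i$ is nonempty, for if $S_1, S_2 \neq \0$ then union-closure produces a size-$2$ member of $D_{12}$, and then (a) leaves no function in $\F$ with $3$ in its domain, violating fullness at $3$; and (c) if $f \in D_{ij}$ and $g \in D_{ik}$ then $f(i) \neq g(i)$, else $f \cup g$ would have domain $\{1,2,3\}$.

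With these in hand, fullness at index $i$ becomes $S_i \cup \set{f(i)}{f \in D_{ij}} \cup \set{f(i)}{f \in D_{ik}} = \{p,n\}$, and by (c) the last two sets are disjoint. In Case A (no singletons), these two projections partition $\{p,n\}$ for every $i$. If all three $D_{ij}$ are nonempty, each has both coordinate-projections equal to a singleton, so $|D_{ij}| = 1$ for every pair; this yields Type $1$, unique up to equivalence. If instead some $D_{ij}$ is empty then (WLOG $D_{23} = \0$) fullness forces $D_{12}$ and $D_{13}$ to be nonempty with full projections onto their second coordinates, while (c) makes their $1$-projections disjoint singletons; hence $|D_{12}| = |D_{13}| = 2$, giving the ``zero dashed circles'' Type $2$ configuration.

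Case B (some singleton exists) is WLOG at index $1$; then (a) gives $D_{23} = \0$ and the analysis of $D_{12}, D_{13}$ proceeds exactly as before, once again pinning down four size-$2$ functions. The only free parameter is $S_1 \in \{\{p\},\{n\},\{p,n\}\}$. The values $S_1 = \{p\}$ and $S_1 = \{n\}$ turn out to be equivalent, via the composite of $\tau_1$ (which swaps the $1$-projections of $D_{12}$ and $D_{13}$ while flipping $S_1$) and the transposition $\pi$ of indices $2$ and $3$ (which swaps $D_{12}$ and $D_{13}$ back into place while leaving $S_1$ alone); the case $S_1 = \{p,n\}$ is fixed by this composite and forms its own class. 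Thus Case B contributes two classes, the ``one'' and ``two dashed circles'' variants of Type $2$, and the total is $1 + 1 + 2 = 4$, as claimed.

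The main obstacle will be careful bookkeeping: confirming the subcases are exhaustive (in particular that two-or-more empty $D_{ij}$'s in Case A, and two-or-more nonempty $S_i$'s in Case B, are both ruled out by fullness) and that the rather liberal equivalence relation -- permutations of coordinates together with independent sign swaps at each coordinate -- really collapses the parameter counts to exactly the numbers above.
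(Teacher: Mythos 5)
Your proof is correct, and it organizes the argument quite differently from the paper's. The paper proceeds constructively: it first argues that $\F$ must contain a function $f$ with a two-element domain, then chases fullness and incompatibility constraints to successively force the existence of $g$, then $h$, then (in the second branch) $e$, showing at each stage that there is essentially no choice, and finally verifies that nothing else can be added except possibly the two singletons with domain $\{y\}$. You instead introduce the invariants $S_i$ and $D_{ij}$ up front, prove three global structural constraints (a singleton at $i$ kills $D_{jk}$; at most one $S_i$ is nonempty; the $i$-projections of $D_{ij}$ and $D_{ik}$ are disjoint), and then read off the classification from the fullness equation $S_i \cup \set{f(i)}{f \in D_{ij}} \cup \set{f(i)}{f \in D_{ik}} = \{p,n\}$. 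Your route is more systematic and makes it somewhat clearer why there are exactly four classes: the dichotomy ``all three $D_{ij}$ nonempty'' versus ``one empty'' accounts for Type 1 versus Type 2, and then $|S_1| \in \{0,1,2\}$ enumerates the Type 2 variants, with the explicit $(\tau_1, (2\,3))$-equivalence collapsing the two one-element choices of $S_1$ to a single class. The paper's version is closer to a minimal forcing argument and is perhaps easier to follow on a first reading with the pictures, but yours would be easier to generalize or to check mechanically. One small expository point: in your justification of (b) you should note explicitly that $S_3 = \0$ follows (since $S_3 \neq \0$ would force $D_{12} = \0$ by (a), contradicting the $D_{12}$-element you just produced); without that, ``no function with $3$ in its domain'' isn't quite established. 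This is a presentation gap, not a mathematical one.
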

\begin{proof}
Suppose $\F$ is a full, union-closed subset of $\mathrm{Fn}(3,2)$ that does not contain a function with domain $\{1,2,3\}$.

If every $f \in \F$ were to have $\card{\mathrm{dom}(f)} = 1$, then the fullness of $\F$ would imply that the functions $\{(1,p)\}$, $\{(2,p)\}$, and $\{(3,p)\}$ are all in $\F$, but then union-closedness of $\F$ would imply $\{(1,p),(2,p),(3,p)\} \in \F$.







Thus $\F$ must contain a function $f$ with $\card{\mathrm{dom}(f)} = 2$. Let $z$ denote the member of $\{1,2,3\}$ not in the domain of $f$. By fullness, some other $g \in \F$ must have $z \in \mathrm{dom}(g)$. But then $\mathrm{dom}(f) \cup \mathrm{dom}(g) = \{1,2,3\}$ so, because $\F$ is union-closed and contains no functions with domain $\{1,2,3\}$, $f$ and $g$ must be incompatible. Because $z \in \mathrm{dom}(g)$ and $|\mathrm{dom}(g)| \leq 2$, this implies $|\mathrm{dom}(g)| = 2$, $|\mathrm{dom}(f) \cap \mathrm{dom}(g)| = 1$, and $f$ and $g$ take different values on the unique element of $\mathrm{dom}(f) \cap \mathrm{dom}(g)$.

\begin{center}
\begin{tikzpicture}[xscale=.9,yscale=.9]

\draw[rounded corners,thick] (0,0) rectangle (.5,2) {};
\draw[rounded corners,thick] (1.25,0) rectangle (1.75,2) {};
\draw[rounded corners,thick] (2.5,0) rectangle (3,2) {};
\node at (.25,.25) {\footnotesize $\bullet$};
\node at (.25,1.75) {\footnotesize $\bullet$};
\node at (1.5,.25) {\footnotesize $\bullet$};
\node at (1.5,1.75) {\footnotesize $\bullet$};
\node at (2.75,.25) {\footnotesize $\bullet$};
\node at (2.75,1.75) {\footnotesize $\bullet$};
\node at (2.75,1) {\footnotesize $z$};
\node[blue] at (.875,1.75) {\small $f$};
\node[blue] at (2.125,.25) {\small $g$};

\draw[blue] (.875,1.75) ellipse (1.1cm and .3cm);
\draw[blue] (2.125,.25) ellipse (1.1cm and .3cm);

\end{tikzpicture}
\end{center}

Let us denote by $y$ the point of $\mathrm{dom}(f) \cap \mathrm{dom}(g)$, and by $x$ the remaining member of $\mathrm{dom}(f)$. 
By the fullness of $\F$, there must be some $h \in \F$ with $h(x) \neq f(x)$. But then observe that $h$ and $g$ must be incompatible, because otherwise union-closedness would give us $g \cup h \in \F$ with $|\mathrm{dom}(g \cup h)| = 3$. There are two possibilities:

\begin{center}
\begin{tikzpicture}[xscale=.9,yscale=.9]

\begin{scope}[shift={(15.5,0)}]
\draw[rounded corners,thick] (0,0) rectangle (.5,2) {};
\draw[rounded corners,thick] (1.25,0) rectangle (1.75,2) {};
\draw[rounded corners,thick] (2.5,0) rectangle (3,2) {};
\node at (.25,.25) {\footnotesize $\bullet$};
\node at (.25,1.75) {\footnotesize $\bullet$};
\node at (1.5,.25) {\footnotesize $\bullet$};
\node at (1.5,1.75) {\footnotesize $\bullet$};
\node at (2.75,.25) {\footnotesize $\bullet$};
\node at (2.75,1.75) {\footnotesize $\bullet$};
\node at (.25,1) {\footnotesize $x$};
\node at (1.5,1) {\footnotesize $y$};
\node at (2.75,1) {\footnotesize $z$};
\node[blue] at (.875,1.75) {\small $f$};
\node[blue] at (2.125,.25) {\small $g$};
\node[blue] at (.85,.9) {\small $h$};

\draw[blue] (.875,1.75) ellipse (1.1cm and .3cm);
\draw[blue] (2.125,.25) ellipse (1.1cm and .3cm);
\draw[rotate=50.2,blue] (1.34,-.03) ellipse (1.4cm and .32cm);
\end{scope}

\node at (13,1) {or};

\draw[rounded corners,thick] (7,0) rectangle (7.5,2) {};
\draw[rounded corners,thick] (8.25,0) rectangle (8.75,2) {};
\draw[rounded corners,thick] (9.5,0) rectangle (10,2) {};
\node at (7.25,.25) {\footnotesize $\bullet$};
\node at (7.25,1.75) {\footnotesize $\bullet$};
\node at (8.5,.25) {\footnotesize $\bullet$};
\node at (8.5,1.75) {\footnotesize $\bullet$};
\node at (9.75,.25) {\footnotesize $\bullet$};
\node at (9.75,1.75) {\footnotesize $\bullet$};
\node at (7.25,1) {\footnotesize $x$};
\node at (8.5,1) {\footnotesize $y$};
\node at (9.75,1) {\footnotesize $z$};
\node[blue] at (7.875,1.75) {\small $f$};
\node[blue] at (9.125,.25) {\small $g$};
\node[blue] at (9.85,-.38) {\small $h$};

\draw[blue] (7.875,1.75) ellipse (1.1cm and .3cm);
\draw[blue] (9.125,.25) ellipse (1.1cm and .3cm);
\path[draw,blue,use Hobby shortcut,closed=true]
(6.9,.25) .. (7.25,.6) .. (7.6,.25) .. (8,-.1) .. (9.75,-.2) .. (10.3,.25) .. (10.3,1.3) .. (9.75,1.4) .. (9.4,1.75) .. (9.75,2.1) .. (10.8,1.3) .. (10.8,.25) .. (9.75,-.7) .. (8,-.6) .. (7.6,-.4);

\end{tikzpicture}
\end{center}
\vspace{-3mm}

In the first case (the picture on the left), we see that we have found the Type 1 solution described in the statement of the theorem. We claim that in this case $\F = \{f,g,h\}$ (i.e., $\F$ contains no functions other than those already shown). To see this, suppose $e \in \F$. If $\card{\mathrm{dom}(e)} = 1$, then one of $f$, $g$, or $h$ has domain $\{1,2,3\} \setminus \mathrm{dom}(e)$, and union-closedness would give us a function with domain $\{1,2,3\}$. Hence $|\mathrm{dom}(e)| = 2$. If $\mathrm{dom}(e) = \mathrm{dom}(f) = \{x,y\}$, then $e(x) = f(x)$ since otherwise $e$ and $h$ would be compatible and $\card{\mathrm{dom}(e \cup h)} = 3$; likewise, $e(y) = f(y)$ since otherwise $e$ and $g$ would be compatible and $\card{\mathrm{dom}(e \cup g)} = 3$. Hence $\mathrm{dom}(e) = \mathrm{dom}(f)$ implies $e = f$. Similarly, $\mathrm{dom}(e) = \mathrm{dom}(g)$ implies $e = g$ and $\mathrm{dom}(e) = \mathrm{dom}(h)$ implies $e = h$. Thus $\F = \{f,g,h\}$, as claimed.

In the second case (the picture on the right), $\F$ must contain at least one more function (because $\F$ is full). In particular, there is some $e \in \F$ with $z \in \mathrm{dom}(e)$ and $e(z) \neq g(z)$. Now, $e$ must satisfy $|\mathrm{dom}(e)| \leq 2$, but it also must be incompatible with both of $f$ and $h$ (because otherwise union-closedness would give us a function in $\F$ with size-$3$ domain). The only way to accomplish both of these is to have $y \in \mathrm{dom}(e)$ and $e(y) = g(y)$.

\begin{center}
\begin{tikzpicture}[xscale=.9,yscale=.9]

\draw[rounded corners,thick] (0,0) rectangle (.5,2) {};
\draw[rounded corners,thick] (1.25,0) rectangle (1.75,2) {};
\draw[rounded corners,thick] (2.5,0) rectangle (3,2) {};
\node at (.25,.25) {\footnotesize $\bullet$};
\node at (.25,1.75) {\footnotesize $\bullet$};
\node at (1.5,.25) {\footnotesize $\bullet$};
\node at (1.5,1.75) {\footnotesize $\bullet$};
\node at (2.75,.25) {\footnotesize $\bullet$};
\node at (2.75,1.75) {\footnotesize $\bullet$};
\node at (.25,1) {\footnotesize $x$};
\node at (1.5,1) {\footnotesize $y$};
\node at (2.75,1) {\footnotesize $z$};
\node[blue] at (.875,1.75) {\small $f$};
\node[blue] at (2.125,.25) {\small $g$};
\node[blue] at (.85,.9) {\small $h$};
\node[blue] at (2.15,1.1) {\small $e$};

\draw[blue] (.875,1.75) ellipse (1.1cm and .28cm);
\draw[blue] (2.125,.25) ellipse (1.1cm and .28cm);
\draw[rotate=50.2,blue] (1.34,-.03) ellipse (1.4cm and .3cm);
\draw[rotate=50.2,blue] (2.13,-.985) ellipse (1.4cm and .3cm);

\end{tikzpicture}
\end{center}

This is the smallest of the Type 2 sets from the statement of the lemma.
Suppose that $d \in \F$. Having $\mathrm{dom}(d) = \{x,z\}$ is impossible, because then $d$ would be compatible with one of $f$ or $h$, and their union would give us a function in $\F$ with size-$3$ domain. Thus $y \in \mathrm{dom}(d)$. If $\mathrm{dom}(d) = \{x,y\}$, then $d(y) = f(y) = h(y)$, since otherwise $d$ and $g$ are compatible, and $|\mathrm{dom}(d \cup g)| = 3$; but then, depending on the value of $d(x)$, either $d = f$ or $d = h$. Similarly, if $\mathrm{dom}(d) = \{y,z\}$ then either $d = e$ or $e = g$. Thus $y \in \mathrm{dom}(d)$, and if $|\mathrm{dom}(d)| = 2$ implies $d \in \{e,f,g,h\}$. Put another way, if $d \in \F \setminus \{e,f,g,h\}$, then $\mathrm{dom}(d) = \{y\}$. This gives us exactly the three Type 2 solutions described in the statement of the lemma.
\end{proof}

We are nearly ready to prove the main theorem, but first we need three more lemmas. The first is a corollary of Theorem 3.47 in \cite{Rudin}, so we do not include a proof here. The other two lemmas capture the analysis necessary to perform the ``balancing act'' described in the introduction.

\begin{lemma}\label{lem:disjoint}
Let $\sum_{n \in \N}a_n$ be a conditionally convergent series, and let $A,B \sub \N$. If $A$ and $B$ are disjoint, then 
$$\textstyle \sum_{n \in A \cup B}a_n = \sum_{n \in A}a_n + \sum_{n \in B}a_n$$ 
whenever the expression on the right is well-defined. Consequently,
$$\textstyle \sum_{n \in C \setminus D}a_n = \sum_{n \in C \cup D}a_n - \sum_{n \in D}a_n$$
for any $C,D \sub \N$, whenever the expression on the right is well-defined. 
\end{lemma}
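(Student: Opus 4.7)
The plan is to reduce everything to finite additivity of partial sums combined with standard extended-real arithmetic. For any $X \sub \N$, write $S_N(X) = \sum_{n \in X,\, n \leq N} a_n$, so that by convention $\sum_{n \in X} a_n = \lim_{N \to \infty} S_N(X)$ whenever this limit exists in $[-\infty,\infty]$.

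For the first identity, disjointness of $A$ and $B$ gives the trivial finite equality $S_N(A \cup B) = S_N(A) + S_N(B)$ for every $N$. The hypothesis that $\sum_{n \in A} a_n + \sum_{n \in B} a_n$ is well-defined means that both $\lim_N S_N(A)$ and $\lim_N S_N(B)$ exist in $[-\infty,\infty]$ and are not the forbidden pair $\{\infty,-\infty\}$; the standard extended-real limit law $\lim(x_N + y_N) = \lim x_N + \lim y_N$ then yields the desired equality. This step is where the reference to Rudin's Theorem 3.47 enters.

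For the second identity, decompose $C \cup D$ as the disjoint union $(C \setminus D) \cup D$ and apply the first part with $A = C \setminus D$ and $B = D$. The only genuine thing to verify is that $\sum_{n \in C \setminus D} a_n$ exists in $[-\infty,\infty]$ and that its sum with $\sum_{n \in D} a_n$ is well-defined in the extended sense; both facts fall out of the finite identity $S_N(C \setminus D) = S_N(C \cup D) - S_N(D)$ together with the hypothesis that $\sum_{n \in C \cup D} a_n - \sum_{n \in D} a_n$ makes sense, by a short case analysis on whether $\sum_{n \in D} a_n$ is finite or $\pm\infty$. There is no conceptual obstacle; the only nuisance is the bookkeeping of the several infinite cases, each of which reduces to a standard extended-real arithmetic rule.
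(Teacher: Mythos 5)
Your proof is correct, and it takes essentially the approach the paper has in mind: the paper omits a proof entirely, remarking only that the first identity is a corollary of Rudin's Theorem 3.47, and your argument via the finite partial-sum identity $S_N(A\cup B)=S_N(A)+S_N(B)$ together with extended-real limit arithmetic is exactly the elementary reasoning behind that citation, extended to cover the $\pm\infty$ cases. The derivation of the second identity from the first, via the disjoint decomposition $C \cup D = (C\setminus D)\cup D$ and the parallel finite identity $S_N(C\setminus D)=S_N(C\cup D)-S_N(D)$, is likewise correct; the case analysis you gesture at (on whether $\sum_{n\in D}a_n$ is finite or infinite) is routine and does work out in each case.
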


Compare the first part of Lemma~\ref{lem:disjoint} with Lemma~\ref{lem:unionclosed}; the conclusions are the same, but we have replaced the assumption that $A$ and $B$ are tame with the assumption that they are disjoint.

\begin{lemma}\label{lem:balance1}
Let $ \sum_{n \in \N} a_n$ be a conditionally convergent series, and let $A \sub \N$ be tame. If $\sum_{n \in A} \,a_n = \infty$, then
there is a set $B \sub A$ such that
$$\textstyle \sum_{n \in B} \,a_n = \infty \quad \text{and} \quad
 \sum_{n \in A \setminus B} \,a_n = \infty.$$
\end{lemma}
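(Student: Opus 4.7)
My plan is to exploit tameness to reduce the problem to splitting a divergent series of positive terms.

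First I would unpack tameness. Let $P = \{n \in A : a_n > 0\}$ and $N = \{n \in A : a_n \leq 0\}$. Since $A$ is tame, at most one of $\sum_{n \in P} a_n$ and $\sum_{n \in N} a_n$ fails to be absolutely convergent. If both were absolutely convergent, then by Lemma~\ref{lem:disjoint} the total sum $\sum_{n \in A} a_n$ would be finite, contradicting the hypothesis. So exactly one fails to be absolutely convergent. Since $\sum_{n \in A} a_n = \infty$, the divergent piece must be $\sum_{n \in P} a_n = \infty$ (a divergent series of non-positive terms would be $-\infty$, which could not be rescued to $+\infty$ by adding an absolutely convergent piece). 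Hence $\sum_{n \in P} a_n = \infty$ and $\sum_{n \in N} a_n$ converges to some finite value $s \in (-\infty, 0]$.

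Next I would split $P$ into two disjoint subsets each still diverging to $\infty$. Since $\sum_{n \in P} a_n = \infty$, I can recursively choose pairwise disjoint finite blocks $F_1, F_2, F_3, \ldots \sub P$ with $\sum_{n \in F_k} a_n \geq 1$ for every $k$: having chosen $F_1, \ldots, F_{k-1}$, the tail $P \setminus (F_1 \cup \cdots \cup F_{k-1})$ is cofinite in $P$ and still has infinite sum, so an initial segment of its enumeration works as $F_k$. Set
\[
P_1 = \bigcup_{k \geq 1} F_{2k-1}, \qquad P_2 = P \setminus P_1.
\]
Since $P_2 \supseteq \bigcup_{k \geq 1} F_{2k}$, both $\sum_{n \in P_1} a_n$ and $\sum_{n \in P_2} a_n$ are series of positive terms whose partial sums exceed any prescribed bound; hence both equal $\infty$.

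Finally I would set $B = P_1 \cup N$, so that $A \setminus B = P_2$. Applying Lemma~\ref{lem:disjoint} to the disjoint sets $P_1$ and $N$ gives
\[
\textstyle \sum_{n \in B} a_n = \sum_{n \in P_1} a_n + \sum_{n \in N} a_n = \infty + s = \infty,
\]
and $\sum_{n \in A \setminus B} a_n = \sum_{n \in P_2} a_n = \infty$, as required.

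There is no real obstacle here; the only mildly delicate point is noting that tameness combined with $\sum_{n \in A} a_n = \infty$ forces the negative part to be (absolutely) convergent, which is what makes the splitting argument work without having to cancel anything.
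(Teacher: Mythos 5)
Your proof is correct and takes essentially the same approach as the paper's: both split the divergent positive contribution into alternating blocks and rely on tameness to render the negative contribution absolutely convergent and hence harmless. The paper blocks on $\lvert a_n \rvert$ over all of $A$ and takes $B$ to be $A$ intersected with alternating intervals, whereas you first isolate $P$ and split only that; this is a presentational difference, not a different argument.
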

\begin{proof}
Using recursion, we may define an increasing sequence $k_0,k_1,k_2,\dots$ of integers as follows. To begin, let $k_0 = 0$. Given $k_0, k_1, k_2, \dots, k_{m-1}$, define $k_m$ to be the least natural number such that $\sum_{n \in A \cap (k_{m-1},k_m]}|a_n| > 1$. Such a $k_m$ must exist because $\sum_{n \in A}|a_n| = \infty$. 

Let $B = \bigcup_{m \in \N}(k_{2m-1},k_{2m}]$.
Because $A$ is tame and $\sum_{n \in A}a_n = \infty$, there are only two options: either $\sum_{n \in B}a_n = \infty$ or $\sum_{n \in B}a_n$ converges absolutely. 
The latter option is ruled out by our construction of the $k_m$, so $\sum_{n \in B}a_n = \infty$. A similar argument shows that $\sum_{n \in A \setminus B}a_n = \infty$.
\end{proof}

It is worth noting that Lemma~\ref{lem:balance1} remains true even if we do not require $A$ to be tame (but a different, more complicated proof is required for this).

\begin{lemma}\label{lem:balance2}
Let $ \sum_{n \in \N} a_n$ be a conditionally convergent series and let $A \sub \N$. Suppose both $A$ and $\N \setminus A$ are tame, and that
$$\textstyle \sum_{n \in A}a_n = \infty \quad \text{ and } \quad \sum_{n \in \N \setminus A}a_n = -\infty.$$
For any $C \sub A$, there is some $B \sub \N \setminus A$ such that $\sum_{n \in C \cup B}a_n$ converges.
\end{lemma}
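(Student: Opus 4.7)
My plan is to reduce to one non-trivial case and treat it with a greedy construction, with convergence analysis keyed on Lemma~\ref{lem:disjoint}.

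By Lemma~\ref{lem:subset}, $C$ is tame; together with tameness of $A$ and $\sum_{n \in A}a_n = +\infty$ (which forces the negative part of $A$ to be absolutely convergent), this makes the negative part of $C$ absolutely convergent. Thus $\sum_{n \in C}a_n$ either converges absolutely---in which case $B = \emptyset$ works trivially---or equals $+\infty$. I assume the latter, and set $D = \{n \in \N \setminus A : a_n \leq 0\}$, so by tameness $\sum_{n \in D}a_n = -\infty$.

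Define $B \subseteq D$ recursively by the greedy rule: $n \in B$ iff $n \in D$ and the running partial sum $s_{n-1} := \sum_{k<n,\,k \in C \cup B}a_k$ is non-negative. I aim to show that $s_n$ converges. For the lower bound, $s$ can drop strictly below $0$ only at a $D$-step, where $s_n \geq -|a_n|$; between crossings $s$ changes only via $C$-terms, whose negative part is absolutely convergent. Since $a_n \to 0$ and $\sum_{n\in C}a_n = +\infty$ rules out $s$ staying negative forever, this yields $\liminf s_n \geq 0$. For the upper bound, on any ``positive phase'' $[m,n]$ with $s_{m-1}<0$ and $s_k\geq 0$ for $m\leq k\leq n$, every $D$-term in $(m,n]$ is included, so Lemma~\ref{lem:disjoint} applied to the partition $\N = C \cup (A\setminus C) \cup D \cup (\N\setminus A \setminus D)$ gives
\[
s_n - s_m \;=\; (T_n - T_m) \;-\!\!\sum_{\substack{m<k\leq n\\ k\in A\setminus C}}\!\!a_k \;-\!\!\sum_{\substack{m<k\leq n\\ k\in \N\setminus A,\, a_k>0}}\!\!a_k,
\]
where $T_N := \sum_{k\leq N}a_k$. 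For $m$ large, the first and third terms are $o(1)$ (by convergence of $\sum_n a_n$ and absolute convergence of the positive part of $\N\setminus A$), and the middle term is $\geq -o(1)$ since $A\setminus C$ is tame with absolutely convergent negative part and non-negative positive contributions. Hence $s_n - s_m \leq o(1)$; combined with $s_m \leq |a_m|$, this gives $\limsup s_n \leq 0$, so $s_n \to 0$. In the degenerate case $s_n \geq 0$ for all $n$ past some $N$, the same decomposition with $m = N$ fixed shows $s_n$ converges to a finite limit, since the alternative $\sum_{n\in A\setminus C}a_n = +\infty$ would force $s_n \to -\infty$, contradicting $s_n \geq 0$.

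The principal obstacle is the upper bound: the auxiliary series $\sum_{k\in C\cup D}a_k$ may itself diverge to $-\infty$ when $\sum_{n\in A\setminus C}a_n = +\infty$, so bounding its partial sums directly is infeasible. The key observation is that the \emph{increment} $\sum_{m<k\leq n,\,k\in A\setminus C}a_k$ is bounded below by $-o(1)$ as $m \to \infty$ (from tameness of $A \setminus C$ alone), which is precisely what converts the decomposition into a genuine upper bound on $s_n - s_m$.
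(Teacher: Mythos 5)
Your proposal is correct and follows essentially the same greedy-construction route as the paper, with the same decomposition-based convergence analysis; the minor variations (restricting $B$ to the negative part $D$ of $\N \setminus A$, and handling the ``$s$ eventually non-negative'' degenerate case directly rather than ruling it out via the paper's Claims 2--4) are cosmetic. One small imprecision worth noting: $s$ can cross from $\geq 0$ to $<0$ at a $C$-step with $a_n < 0$, not only at a $D$-step, but since any crossing index $n$ has $s_{n-1}\geq 0$ the bound $s_n \geq -|a_n|$ you rely on holds regardless, so the rest of your lower-bound argument is unaffected.
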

\begin{proof}
If $\sum_{n \in C}a_n$ converges, then we may take $B = \0$; and if $\sum_{n \in C \cup (\N \setminus A)}a_n$ converges, then we may take $B = \N \setminus A$. Thus we may (and do) assume, for the remainder of the proof, that neither $\sum_{n \in C}a_n$ nor $\sum_{n \in C \cup (\N \setminus A)}a_n$ converges.

\begin{claim1}
$\sum_{n \in C,\,a_n>0}\,a_n = \infty$ and $\sum_{n \in C,\,a_n \leq 0}\,a_n \text{ converges absolutely.}$
\end{claim1}
\begin{proof}[Proof of claim]
Because $\sum_{n \in A}a_n = \infty$ and $A$ is tame, $\sum_{n \in A,\,a_n \leq 0}a_n$ converges absolutely. As $C \sub A$, $\sum_{n \in C,\,a_n \leq 0}\,a_n \text{ converges absolutely.}$ But $\sum_{n \in C}a_n$ does not converge, so this implies $\sum_{n \in C,\,a_n>0}\,a_n = \infty$.
\end{proof}

\begin{claim2}
$\sum_{n \in C \cup (\N \setminus A)}\,a_n = -\infty$.
\end{claim2}
\begin{proof}[Proof of claim]
By Lemma~\ref{lem:disjoint},
$$\textstyle \sum_{n \in C \cup (\N \setminus A)}\,a_n = \sum_{n \in \N}\,a_n - \sum_{n \in A \setminus C}\,a_n.$$
As $\sum_{n \in \N}a_n$ converges but $\sum_{n \in C \cup (\N \setminus A)}a_n$ does not, this equation implies that $\sum_{n \in A \setminus C}a_n$ does not converge. But $A$ is tame and $\sum_{n \in A} = \infty$, so it follows that $\sum_{n \in A \setminus C}a_n = \infty$. Plugging this back into the equation above, we get $\sum_{n \in C \cup (\N \setminus A)}a_n = -\infty$.
\end{proof}

Define $B \sub \N \setminus A$ recursively according to the following simple rule. If it has already been determined for every $n < j$ whether $n \in B$, then put 
$$\textstyle j \in B \quad \text{ if and only if } \quad
j \notin A \,\text{ and } \, \sum_{n \in C \cup B,\, n < j} \,a_n \,>\, 0.$$

\noindent We claim that $\sum_{n \in C \cup B}a_n$ converges to $0$. Fix $\e > 0$, and for each $j \in \N$ let $S(j)$ denote the finite partial sum $\sum_{n \in C \cup B,\, n < j}\,a_n$. 

\begin{claim3}
$B$ is infinite.
\end{claim3}
\begin{proof}[Proof of claim]
Aiming for a contradiction, suppose $B$ is finite. Because $\sum_{n \in C}a_n = \infty$, this implies $\sum_{n \in C \cup B}a_n = \infty$, which implies that $S(j) > 0$ for all but finitely many $j$. However, by the construction of $B$, if $S(j) > 0$ for all but finitely many $j$ then $B$ is a co-finite subset of $\N \setminus A$. But $\N \setminus A$ is infinite, so this makes $B$ infinite too.
\end{proof}

\begin{claim4}
$S(j) < \e$ for infinitely many $j \in B$.
\end{claim4}
\begin{proof}[Proof of claim]
Aiming for a contradiction, suppose that $S(j) \geq \e$ for all but finitely many $j \in B$ and fix $M \in \N$ greater than all $j \in B$ with $S(j) < \e$. Because $\sum_{n \in C,\,a_n \leq 0}\,a_n$ converges absolutely, all sufficiently large $N$ have the property that if $\ell \geq k \geq N$ then $\sum_{n \in [k,\ell] \cap C} a_n > -\e$. Because $B$ is infinite, we may fix some $N \in B$ with this property. For any $j > N$, let $j_0 = \max\{i \in B : i \leq j\}$ and observe that
$$\textstyle S(j) = S(j_0) + \sum_{n \in (j_0,j] \cap C}a_n > \e + (-\e) = 0$$
which shows that $S(j) > 0$ for all $j > N$. But then, as in the proof of the previous claim, this implies that $B$ is a co-finite subset of $\N \setminus A$. But $\sum_{n \in C \cup (\N \setminus A)}\,a_n = -\infty$ by Claim 2, so if $B$ is a co-finite subset of $\N \setminus A$ then $\sum_{n \in C \cup B}\,a_n = -\infty$. This implies that $S(j) < \e$ for all but finitely many $j$, which is the desired contradiction.
\end{proof}

For all sufficiently large $N \in \N$, and for all $\ell \geq k \geq N$,
\begin{itemize}
\item[$(i)$] $|a_\ell| < \e$, 
\item[$(ii)$] $\sum_{n \in [k,\ell] \cap C} a_n > -\e$,
\item[$(iii)$] $\sum_{n \in [k,\ell] \cap (A \setminus C)} a_n > -\e$,
\item[$(iv)$] $\sum_{n \in [k,\ell]} a_n < \e$.
\end{itemize}
Parts $(i)$ and $(iv)$ hold for all sufficiently large $N$ because $\sum_{n \in \N}a_n$ converges; $(ii)$ holds for all sufficiently large $N$ because $\sum_{n \in C,\,a_n \leq 0}\,a_n$ converges; $(iii)$ holds for all sufficiently large $N$ because $\sum_{n \in A \setminus C,\,a_n \leq 0}\,a_n$ converges.

Using Claims 3 and 4, we may fix some $N \in B$ such that $S(N) < \e$ and such that $N$ satisfies $(i)$ - $(iv)$. 

\begin{claim5}
If $j > N$ then $-2\e < S(j) < 3\e$.
\end{claim5}
\begin{proof}[Proof of claim]
Let $j > N$. To show $S(j) > -2\e$, let ${j_0 = \max\{i \in B : i \leq j\}}$ and observe that
$$\textstyle S(j) \,=\, S(j_0-1) + a_{j_0} + \sum_{n \in (j_0,j] \cap C}a_n.$$
Clearly $j_0 \in B$, so our construction of $B$ implies that $S(j_0-1) > 0$. Also recall that $N \in B$, so that $j_0 \geq N$. By our choice of $N$ (specifically parts $(i)$ and $(ii)$), this implies $a_{j_0} > -\e$ and $\sum_{n \in (j_0,j] \cap C}a_n > -\e$.
Combining these inequalities with the equation above, we get $S(j) > -2\e$.

For the second inequality, if $S(j) < \e$ then there is nothing to prove, so let us suppose that $S(j) \geq \e$.
Let $j_1 = \max\set{i \leq j}{S(i) < \e}$. Recall that $S(N) < \e$, so $j_1$ is well-defined and $j_1 \geq N$.
Now $S(j_1+1) \geq \e$, so by our choice of $N$ (specifically part $(i)$), $S(j_1) \geq 0$, which means $S(i) \geq 0$ for all $j_1 \leq i \leq j$. 
From this fact and from our definition of $B$ it follows that $B \cap [j_1,j] = [j_1,j] \setminus A$. Hence
\begin{align*}
S(j) & = \textstyle S(j_1) + \sum_{n \in (j_1,j] \cap C} a_n + \sum_{n \in (j_1,j] \cap B} a_n \\
& = \textstyle S(j_1) + \sum_{n \in (j_1,j] \cap C} a_n + \sum_{n \in (j_1,j] \cap (\N \setminus A)} a_n.
\end{align*}
By part $(iii)$ of our choice of $N$, $\sum_{n \in (j_1,j] \cap (A \setminus C)} a_n + \e > 0$; therefore
\begin{align*}
S(j) & < \textstyle S(j) + \left(\sum_{n \in (j_1,j] \cap (A \setminus C)} a_n + \e\right) \\
& = \textstyle S(j_1) + \sum_{n \in (j_1,j] \cap C} a_n + \sum_{n \in (j_1,j] \cap (\N \setminus A)} a_n + \sum_{n \in (j_1,j] \cap (A \setminus C)} a_n + \e \\
& = \textstyle S(j_1) + \sum_{n \in (j_1,j]} a_n + \e.
\end{align*}
Now $S(j_1) < \e$ by definition and $\sum_{n \in (j_1,j]} a_n < \e$ by part $(iv)$ of our choice of $N$. Thus $S(j) < 3\e$ as claimed.
\end{proof}

Hence for any $\e > 0$, there is some $N \in \N$ such that $-2\e < S(j) < 3\e$ for every $j \geq N$. It follows that $\sum_{n \in C \cup D}a_n = 0$, as claimed.
\end{proof}


\begin{proof}[Proof of the main theorem]
Let $\sum_{n \in \N}a^1_n$, $\sum_{n \in \N}a^2_n$, and $\sum_{n \in \N}a^3_n$ be conditionally convergent series. Let
$$\F = \textstyle \set{f_A}{A \sub \N \text{ is tame and } f_A \neq \0}.$$
If there is some $f_A \in \F$ such that $|\mathrm{dom}(f_A)| = 3$ then we are done, because the set $A$ sends all three of our series to infinity. So let us suppose that this is not the case. By Lemmas~\ref{lem:tameness} and \ref{lem:twotypes}, the family $\F$ must have one of the two types described in Lemma~\ref{lem:twotypes}.


\vspace{3mm}
\noindent \emph{Case 1:} Suppose that $\F$ is of Type $1$. For convenience, let us fix a single $\F \sub \mathrm{fn}(3,2)$ with Type $1$, as show below:

\begin{center}
\begin{tikzpicture}[xscale=1.1,yscale=1.1]

\draw[rounded corners,thick] (-1,3.206) rectangle (1,3.706) {};
\node at (-.75,3.45) {\scriptsize $p$};
\node at (.75,3.45) {\scriptsize $n$};
\draw[rotate=30,rounded corners,thick] (-.25,1) rectangle (.25,3) {};
\node at (-1.375,2.382) {\scriptsize $n$};
\node at (-.625,1.083) {\scriptsize $p$};
\node at (1.375,2.382) {\scriptsize $p$};
\node at (.625,1.083) {\scriptsize $n$};
\draw[rotate=-30,rounded corners,thick] (-.25,1) rectangle (.25,3) {};
\node at (0,3.456) {\small $1$};
\node at (-1,1.733) {\small $2$};
\node at (1,1.733) {\small $3$};
\node[blue] at (-1.06,2.92) {\small $f$};
\node[blue] at (1.06,2.92) {\small $g$};
\node[blue] at (0,1.083) {\small $h$};

\draw[blue] (0,1.083) ellipse (.97cm and .28cm);
\draw[rotate=-60,blue] (-2,2.382) ellipse (.97cm and .28cm);
\draw[rotate=60,blue] (2,2.382) ellipse (.97cm and .28cm);

\end{tikzpicture}
\end{center}

\noindent It suffices just to analyze this one instance of a Type 1 subset of $\mathrm{Fn}(3,2)$, because if $\F$ had any other equivalent type, then the argument below would remain essentially unchanged: we would merely need to relabel the series $\sum_{n \in \N}a^1_n$, $\sum_{n \in \N}a^2_n$, and $\sum_{n \in \N}a^3_n$, and/or interchange the role of positive and negative signs in one or more of the series.

\begin{claim}
There exists a partition of $\N$ into tame sets $F$, $G$, and $H$ such that
$f_F = f$, $f_G = g$, and $f_H = h$.
\end{claim}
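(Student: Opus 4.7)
The plan is to start from the canonical partition supplied by Lemma~\ref{lem:partition}, sort its blocks by their prints, and reassemble them into the required three-piece partition.

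First, let $\P$ be a partition of $\N$ into at most $8$ blocks, each having constant sign on every series, as produced by Lemma~\ref{lem:partition}. Lemma~\ref{lem:full} then says that every $A \in \P$ is tame with respect to each of the three series, so tame in our combined sense. By Lemma~\ref{lem:tameness} and the Type~$1$ assumption (which pins down $\F = \{f,g,h\}$), the print $f_A$ of each block lies in $\{f,g,h,\emptyset\}$. Sort the blocks by this value and set
\[
F' = \textstyle\bigcup\{A \in \P : f_A = f\}, \quad G' = \bigcup\{A \in \P : f_A = g\}, \quad H' = \bigcup\{A \in \P : f_A = h\},
\]
and $R = \bigcup\{A \in \P : f_A = \emptyset\}$. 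These four sets partition $\N$. Each of $F',G',H'$ is nonempty, because Lemma~\ref{lem:partition} guarantees that, for every coordinate $i$, some block of $\P$ sends series $i$ to $+\infty$; inspection of the three functions $f,g,h$ shows that such a block must lie in $F'$ (for $i=1$), $H'$ (for $i=2$), or $G'$ (for $i=3$).

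The main verification is that $F'$, $G'$, $H'$, and $R$ are each tame with the expected print. This is nothing more than an iterated application of Lemma~\ref{lem:unionclosed}, performed coordinate by coordinate: the blocks assembling $F'$ all send series~$1$ to $+\infty$, send series~$2$ to $-\infty$, and leave series~$3$ absolutely convergent, so no $(+\infty)+(-\infty)$ collision can occur when we union them. Hence $F'$ is tame with $f_{F'}=f$, and the same argument gives $f_{G'}=g$, $f_{H'}=h$, and $f_R=\emptyset$.

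Finally, $R$ is absolutely convergent on every series, so its contribution in each coordinate is a finite real that cannot disturb $\pm\infty$ behavior. Choose, say, $F=F'$, $G=G'$, and $H=H'\cup R$; one last application of Lemma~\ref{lem:unionclosed} certifies that $H$ is tame with $f_H=h$. The triple $F,G,H$ partitions $\N$ and carries the prescribed prints. I do not anticipate a genuine obstacle; the whole point of Lemma~\ref{lem:twotypes}'s Type~$1$ classification is that blocks with a common print never clash when unioned, so the regrouping is routine once Lemma~\ref{lem:partition} has supplied a starting partition of the right granularity.
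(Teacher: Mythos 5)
Your proof is correct and follows essentially the same route as the paper: partition $\N$ into the at-most-$8$ canonical blocks from Lemma~\ref{lem:partition}, sort them by print into $\{f,g,h,\emptyset\}$-classes, and merge with Lemma~\ref{lem:unionclosed}. The only cosmetic difference is that you attach the junk set $R$ to $H$ whereas the paper attaches it to $F$; since $R$ has empty print, either choice works.
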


\begin{proof}[Proof of claim]
First, partition $\N$ into $8$ sets $A_1, A_2, \dots, A_8$ according to where the terms of the $\sum_{n \in \N}a^i_n$ are positive or negative. For each $i \leq 8$, either $f_{A_i}$ is the empty function (this happens precisely when $\sum_{n \in A_i}a^j_n$ is absolutely convergent for all $j \in \{1,2,3\}$), or else $f_{A_i} \in \{f,g,h\}$.

Let
$J = \textstyle \bigcup \set{A_i}{f_{A_i} = \0},$
(this is a ``junk'' set) and then put
\begin{align*}
F & = \textstyle \bigcup \set{A_i}{f_{A_i} = f} \cup J,\\
G & = \textstyle \bigcup \set{A_i}{f_{A_i} = g} \text{, and} \\
H & = \textstyle \bigcup \set{A_i}{f_{A_i} = h}.
\end{align*}
By Lemma~\ref{lem:partition} and the fact that $\F$ has the type pictured above, there must be some $A_i$ with $f_{A_i} = f$, some $A_i$ with $f_{A_i} = g$, and some $A_i$ with $f_{A_i} = h$.
From this, and from Lemma \ref{lem:unionclosed}, it follows that each of $F$, $G$, and $H$ is tame, and that $f_F = f$, $f_G = g$, and $f_H = h$.
\end{proof}

Using Lemma~\ref{lem:balance1}, fix $B \sub F$ such that
$$\textstyle \sum_{n \in B}a^1_n = \infty \quad \text{and} \quad \sum_{n \in F \setminus B}a^1_n = \infty$$
and then set $A = B \cup G$. 

Because $F$, $G$, and $H$ form a partition of $\N$, and because $H$ is tame with $f_H = h$, the subseries $\sum_{F \cup G}a^1_n = \sum_{\N \setminus H}a^1_n$ is (conditionally) convergent. By Lemma~\ref{lem:disjoint}, 
this implies that
$$\textstyle \sum_{n \in B \cup G}a^1_n = \sum_{n \in (F \cup G) \setminus (F \setminus B)}a^1_n = \sum_{n \in F \cup G}a^1_n - \sum_{n \in F \setminus B}a^1_n= -\infty.$$

By Lemma~\ref{lem:subset}, $B$ is tame. Thus $f_B \in \F$, and because $\sum_{n \in B}a^1_n = \infty$ this implies $f_B = f$. In particular, $\sum_{n \in B}a^2_n = -\infty$. As $\sum_{n \in G}a^2_n$ converges absolutely, Lemma~\ref{lem:unionclosed} implies that $\sum_{n \in B \cup G}a^2_n = -\infty$.

Finally, $\sum_{n \in B}a^3_n$ converges absolutely (because $\sum_{n \in F}a^3_n$ converges absolutely and $B \sub F$), while $\sum_{n \in G}a^3_n = \infty$. Hence $\sum_{n \in B \cup G}a^3_n = \infty$ by Lemma~\ref{lem:unionclosed}.

\vspace{3mm}
\noindent \emph{Case 2:} Suppose that $\F$ is of Type $2$. For convenience, let us fix a single $\F \sub \mathrm{Fn}(3,2)$ with Type $2$, as show below:

\vspace{1mm}
\begin{center}
\begin{tikzpicture}[xscale=1.2,yscale=1.2]

\draw[rounded corners,thick] (5.75,2) rectangle (7.75,2.5) {};
\node at (6,2.25) {\scriptsize $p$};
\node at (7.5,2.25) {\scriptsize $n$};
\draw[rounded corners,thick] (4.451,1.25) rectangle (4.951,3.25) {};
\node at (4.701,1.5) {\scriptsize $n$};
\node at (4.701,3) {\scriptsize $p$};
\draw[rounded corners,thick] (8.549,1.25) rectangle (9.049,3.25) {};
\node at (8.799,1.5) {\scriptsize $n$};
\node at (8.799,3) {\scriptsize $p$};
\node at (6.75,2.25) {\small $1$};
\node at (4.701,2.25) {\small $2$};
\node at (8.799,2.25) {\small $3$};
\node[blue] at (5.35,2.625) {\small $e$};
\node[blue] at (5.35,1.875) {\small $f$};
\node[blue] at (8.15,2.625) {\small $g$};
\node[blue] at (8.15,1.875) {\small $h$};
\node[blue] at (6.25,2.75) {\small $c_1$};
\node[blue] at (7.25,2.75) {\small $c_2$};

\draw[rotate=30,blue] (5.57,-1.042) ellipse (1cm and .28cm);
\draw[rotate=-30,blue] (3.318,4.95) ellipse (1cm and .28cm);
\draw[rotate=30,blue] (8.379,-1.792) ellipse (1cm and .28cm);
\draw[rotate=-30,blue] (6.123,5.7) ellipse (1cm and .28cm);
\draw[blue,dashed] (6,2.25) circle (.4cm);
\draw[blue,dashed] (7.5,2.25) circle (.4cm);

\end{tikzpicture}
\end{center}
\vspace{1mm}

\noindent As in case 1, it suffices just to analyze this one instance of a Type 1 subset of $\mathrm{Fn}(3,2)$, because if $\F$ had any other equivalent type, then the argument below would remain essentially unchanged.
\noindent We do not specify at this point whether either/both of the functions $c_1,c_2$ with domain $\{1\}$ are in $\F$.

\begin{claim}
There exists a partition of $\N$ into tame sets $E$, $F$, $G$, and $H$ such that
$f_E = e$, $f_F = f$, $f_G = g$, and $f_H = h$.
\end{claim}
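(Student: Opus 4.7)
The plan is to adapt the proof of the analogous claim in Case~1 to handle the possibility that $\F$ contains the domain-$\{1\}$ functions $c_1 = \{(1,p)\}$ and/or $c_2 = \{(1,n)\}$. As in Case~1, I would partition $\N$ into $8$ sets $A_1,\dots,A_8$ by the sign-patterns of $a^1_n, a^2_n, a^3_n$; by Lemma~\ref{lem:full} each $A_i$ is tame, so $f_{A_i}$ is either empty or a member of $\F \sub \{e,f,g,h,c_1,c_2\}$.

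The first step is to verify that each of $e, f, g, h$ is realized as $f_{A_i}$ for some $i$. Inspecting the Type~$2$ picture above, $e$ is the unique function in $\F$ with $2 \mapsto p$, $f$ is the unique one with $2 \mapsto n$, $g$ with $3 \mapsto p$, and $h$ with $3 \mapsto n$ — the junk functions $c_1, c_2$ do not touch coordinates $2$ or $3$ at all. Combined with Lemma~\ref{lem:partition}, which provides for each series some $A_i$ sending it to $+\infty$ and another sending it to $-\infty$, this forces representatives of each of $e,f,g,h$ to appear among the $A_i$'s.

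The second (and more delicate) step is to absorb the ``junk'' blocks — those $A_i$ with $f_{A_i} \in \{c_1, c_2, \emptyset\}$ — into the four target sets. The key observation is that $c_1 \sub e$ and $c_2 \sub h$ when viewed as sets of ordered pairs, so setting $E = \bigcup \{A_i : f_{A_i} \in \{e, c_1, \emptyset\}\}$, $F = \bigcup \{A_i : f_{A_i} = f\}$, $G = \bigcup \{A_i : f_{A_i} = g\}$, and $H = \bigcup \{A_i : f_{A_i} \in \{h, c_2\}\}$ yields a partition of $\N$ in which the signatures of the constituent $A_i$'s within any single one of $E, F, G, H$ are pairwise compatible (no series experiences both $+\infty$ and $-\infty$). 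Iterating Lemma~\ref{lem:unionclosed} then gives that $E, F, G, H$ are tame with $f_E = e$, $f_F = f$, $f_G = g$, $f_H = h$. The step I would most worry about is precisely this distribution of junk: routing $c_2$-blocks into $E$ instead of $H$ would produce a set whose positive and non-positive parts \emph{both} diverge on series~$1$, violating tameness; the convenient containments $c_1 \sub e$ and $c_2 \sub h$ are exactly what make the natural assignment compatible.
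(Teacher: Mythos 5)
Your proof is correct and takes essentially the same approach as the paper's: partition $\N$ into the eight sign-blocks, observe each block is tame with $f_{A_i} \in \{\emptyset, c_1, c_2, e, f, g, h\}$, use fullness to see that each of $e,f,g,h$ is realized, and then merge the junk and $c_1, c_2$ blocks into compatible targets via Lemma~\ref{lem:unionclosed}. The only (harmless) difference is that you route the $c_2$-blocks into $H$ whereas the paper routes them into $G$; both work, since in fact $c_2 = \{(1,n)\}$ is a subfunction of both $g$ and $h$ (just as $c_1 \sub e$ and $c_1 \sub f$), so the assignment is not as forced as your last sentence suggests --- the only constraint is to keep $c_1$-blocks out of $G,H$ and $c_2$-blocks out of $E,F$.
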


\begin{proof}[Proof of claim]
First, partition $\N$ into $8$ sets $A_1, A_2, \dots, A_8$ according to where the terms of the $\sum_{n \in \N}a^i_n$ are positive or negative. For each $i \leq 8$, either $f_{A_i}$ is the empty function (this happens precisely when $\sum_{n \in A_i}a^j_n$ is absolutely convergent for all $j \in \{1,2,3\}$), or else $f_{A_i} \in \{c_1,c_2,e,f,g,h\}$.

Let
$J = \textstyle \bigcup \set{A_i}{f_{A_i} = \0},$
(this is a ``junk'' set) and then put
\begin{align*}
E & = \textstyle \bigcup \set{A_i}{f_{A_i} = e \text{ or } f_{A_i} = c_1} \cup J,\\
F & = \textstyle \bigcup \set{A_i}{f_{A_i} = f}, \\
G & = \textstyle \bigcup \set{A_i}{f_{A_i} = g \text{ or } f_{A_i} = c_2} \text{, and} \\
H & = \textstyle \bigcup \set{A_i}{f_{A_i} = h}.
\end{align*}
By Lemma~\ref{lem:partition} and the fact that $\F$ has the type pictured above, there must be some $A_i$ with $f_{A_i} = e$, some $A_i$ with $f_{A_i} = f$, some $A_i$ with $f_{A_i} = g$, and some $A_i$ with $f_{A_i} = h$.
From this, and from Lemma~\ref{lem:unionclosed}, it follows that each of $E$, $F$, $G$, and $H$ is tame, and that $f_E = e$, $f_F = f$, $f_G = g$, and $f_H = h$.
\end{proof}

The set $E \cup F$ is not tame with respect to $\sum_{n \in \N}a^2_n$, but by Lemma~\ref{lem:unionclosed} $E \cup F$ is tame with respect to $\sum_{n\in\N}a^1_n$. Similarly, $G \cup H$ is not tame with respect to $\sum_{n \in \N}a^3_n$, but is tame with respect to $\sum_{n\in\N}a^1_n$.
Thus, by Lemma~\ref{lem:balance2} there is some $C \sub G \cup H$ such that
$\textstyle \sum_{n \in E \cup C}a^1_n \text{ is converges}.$
We have three sub-cases to consider:

\vspace{3mm}
\noindent \emph{Case 2A:} Suppose that $\sum_{n \in C}a^3_n$ is absolutely convergent.

\vspace{1mm}
\begin{center}
\begin{tikzpicture}[xscale=1.1,yscale=1.1]

\draw[rounded corners,thick] (5.75,2) rectangle (7.75,2.5) {};
\node at (6,2.25) {\scriptsize $p$};
\node at (7.5,2.25) {\scriptsize $n$};
\draw[rounded corners,thick] (4.451,1.25) rectangle (4.951,3.25) {};
\node at (4.701,1.5) {\scriptsize $n$};
\node at (4.701,3) {\scriptsize $p$};
\draw[rounded corners,thick] (8.549,1.25) rectangle (9.049,3.25) {};
\node at (8.799,1.5) {\scriptsize $n$};
\node at (8.799,3) {\scriptsize $p$};
\node at (6.75,2.25) {\small $1$};
\node at (4.701,2.25) {\small $2$};
\node at (8.799,2.25) {\small $3$};
\node[blue] at (6.75,2.75) {\small $f_C \sub c_2$};

\node[blue] at (5.35,2.675) {\small $f_E$};
\draw[rotate=-30,blue] (3.318,4.95) ellipse (1cm and .28cm);
\draw[blue,dashed] (7.5,2.25) circle (.4cm);

\end{tikzpicture}
\end{center}
\vspace{1mm}

In this case, one may check that $C$ is tame and that either $f_C = \0$ or $f_C = c_2$. (This fact is not used below: we mention it only to aid the reader's intuition.) Take $A = E \cup C \cup G$.

Observe that $\sum_{n \in G \setminus C}a^3_n = \infty$ by Lemma~\ref{lem:unionclosed}. Because $G \setminus C$ is tame by Lemma~\ref{lem:subset}, $f_{G \setminus C} \in \F$, and so we must have $f_{G \setminus C} = f_G$. In particular, $\sum_{n \in G \setminus C}a^1_n = -\infty$.
Because $\sum_{E \cup C}a^1_n$ is conditionally convergent and $\sum_{n \in G \setminus C}a^1_n = -\infty$, Lemma~\ref{lem:disjoint} implies
$$\textstyle \sum_{n \in E \cup C \cup G}a^1_n = \sum_{n \in (E \cup C) \cup (G \setminus C)}a^1_n = \sum_{n \in E \cup C}a^1_n + \sum_{n \in G \setminus C}a^1_n = -\infty.$$

 $\sum_{G \cup C}a^2_n$ converges absolutely while $\sum_E a^2_n = \infty$, so $\sum_{n \in E \cup C \cup G}a^2_n = \infty$ by Lemma~\ref{lem:unionclosed}.

Finally, $\sum_{E \cup C}a^3_n$ converges and $\sum_{G \setminus C}a^3_n = \infty$, so by Lemma~\ref{lem:disjoint},
$$\textstyle \sum_{n \in E \cup C \cup G}a^3_n = \sum_{n \in E \cup C}a^3_n + \sum_{n \in G \setminus C}a^3_n = \infty.$$

\vspace{3mm}
\noindent \emph{Case 2B:} Suppose that $\sum_{n \in B}a^3_n$ is not absolutely convergent, but $B$ is tame with respect to $\sum_{n \in \N}a^3_n$.

\vspace{1mm}
\begin{center}
\begin{tikzpicture}

\begin{scope}[shift={(-7.5,0)}]
\draw[rounded corners,thick] (5.75,2) rectangle (7.75,2.5) {};
\node at (6,2.25) {\scriptsize $p$};
\node at (7.5,2.25) {\scriptsize $n$};
\draw[rounded corners,thick] (4.451,1.25) rectangle (4.951,3.25) {};
\node at (4.701,1.5) {\scriptsize $n$};
\node at (4.701,3) {\scriptsize $p$};
\draw[rounded corners,thick] (8.549,1.25) rectangle (9.049,3.25) {};
\node at (8.799,1.5) {\scriptsize $n$};
\node at (8.799,3) {\scriptsize $p$};
\node at (6.75,2.25) {\small $1$};
\node at (4.701,2.25) {\small $2$};
\node at (8.799,2.25) {\small $3$};
\node[blue] at (7.5,3.1) {\small $f_C = f_G$};

\node[blue] at (5.35,2.675) {\small $f_E$};
\draw[rotate=-30,blue] (3.318,4.95) ellipse (1cm and .28cm);
\draw[rotate=30,blue] (8.379,-1.792) ellipse (1cm and .28cm);
\end{scope}

\node at (3,2.25) {\small or};

\draw[rounded corners,thick] (5.75,2) rectangle (7.75,2.5) {};
\node at (6,2.25) {\scriptsize $p$};
\node at (7.5,2.25) {\scriptsize $n$};
\draw[rounded corners,thick] (4.451,1.25) rectangle (4.951,3.25) {};
\node at (4.701,1.5) {\scriptsize $n$};
\node at (4.701,3) {\scriptsize $p$};
\draw[rounded corners,thick] (8.549,1.25) rectangle (9.049,3.25) {};
\node at (8.799,1.5) {\scriptsize $n$};
\node at (8.799,3) {\scriptsize $p$};
\node at (6.75,2.25) {\small $1$};
\node at (4.701,2.25) {\small $2$};
\node at (8.799,2.25) {\small $3$};
\node[blue] at (7.5,1.45) {\small $f_C = f_H$};

\node[blue] at (5.35,2.675) {\small $f_E$};
\draw[rotate=-30,blue] (3.318,4.95) ellipse (1cm and .28cm);
\draw[rotate=-30,blue] (6.123,5.7) ellipse (1cm and .28cm);

\end{tikzpicture}
\end{center}
\vspace{1mm}

$C$ is tame with respect to $\sum_{n \in \N}a^1_n$ by Lemma~\ref{lem:subset}, because $C \sub G \cup H$ and $G \cup H$ is tame with respect to $\sum_{n \in \N}a^1_n$. $C$ is tame with respect to $\sum_{n \in \N}a^2_n$ because $\sum_{n \in C}a^1_n$ is absolutely convergent, and $C$ is tame with respect to $\sum_{n \in \N}a^3_n$ by hypothesis. Hence $C$ is tame and $f_C \in \F$.
Because $\sum_{n \in \N}a^3_n$ is not absolutely convergent, $f_C \in \F$ implies that either $f_C = f_G$ or $f_C = f_H$. Let us suppose that $f_C = f_G$. (The argument is essentially identical if instead $f_C = f_H$.) In particular, $\sum_{n \in C}a^3_n = \infty$ and $\sum_{n \in C}a^1_n = -\infty$.

Using Lemma~\ref{lem:balance1}, fix $B \sub C$ such that
$$\textstyle \sum_{n \in B}a^3_n = \infty \quad \text{and} \quad \sum_{n \in C \setminus B}a^3_n = \infty$$
and then set $A = E \cup B$. Both $B$ and $C \setminus B$ are tame by Lemma~\ref{lem:subset}, so (as in the previous paragraph) $f_B = f_{C \setminus B} = f_G$.

Because $\sum_{E \cup C}a^1_n$ converges and $\sum_{n \in C \setminus B}a^1_n = -\infty$ (because $f_{C \setminus B} = f_G$), Lemma~\ref{lem:disjoint} implies that
$$\textstyle \sum_{n \in E \cup B}a^1_n = \sum_{n \in (E \cup C) \setminus (C \setminus B)}a^1_n = \sum_{n \in E \cup C}a^1_n - \sum_{n \in C \setminus B}a^1_n = \infty.$$

Because $\sum_{n \in E}a^2_n = \infty$ and $\sum_{n \in B}a^2_n$ is absolutely convergent, we have $\sum_{n \in E \cup B}a^2_n = \infty$.

Finally, because $\sum_{n \in E}a^3_n$ is absolutely convergent and $\sum_{n \in B}a^3_n = \infty$, we have $\sum_{n \in E \cup B}a^3_n = \infty$.

\vspace{3mm}
\noindent \emph{Case 2C:} Suppose that $C$ is not tame with respect to $\sum_{n \in \N}a^3_n$.

\vspace{1mm}
\begin{center}
\begin{tikzpicture}

\draw[rounded corners,thick] (5.75,2) rectangle (7.75,2.5) {};
\node at (6,2.25) {\scriptsize $p$};
\node at (7.5,2.25) {\scriptsize $n$};
\draw[rounded corners,thick] (4.451,1.25) rectangle (4.951,3.25) {};
\node at (4.701,1.5) {\scriptsize $n$};
\node at (4.701,3) {\scriptsize $p$};
\draw[rounded corners,thick] (8.549,1.25) rectangle (9.049,3.25) {};
\node at (8.799,1.5) {\scriptsize $n$};
\node at (8.799,3) {\scriptsize $p$};
\node at (6.75,2.25) {\small $1$};
\node at (4.701,2.25) {\small $2$};
\node at (8.799,2.25) {\small $3$};
\node[blue] at (7.25,3.1) {\small $f_{C \cap G} = f_G$};
\node[blue] at (7.25,1.45) {\small $f_{C \cap H} = f_H$};

\node[blue] at (5.35,2.675) {\small $f_E$};
\draw[rotate=-30,blue] (3.318,4.95) ellipse (1cm and .28cm);
\draw[rotate=-30,blue] (6.123,5.7) ellipse (1cm and .28cm);
\draw[rotate=30,blue] (8.379,-1.792) ellipse (1cm and .28cm);

\end{tikzpicture}
\end{center}
\vspace{1mm}

In this case, $\sum_{n \in C \cap G}a^3_n = \infty$ and $\sum_{n \in C \cap H}a^3_n = -\infty$. As both $C \cap G$ and $C \cap H$ are tame by Lemma~\ref{lem:subset}, $f_{C \cap G} = f_G$ and $f_{C \cap H} = f_H$ as shown in the picture. Let $A = E \cup (C \cap G)$. 

Because $\sum_{E \cup C}a^1_n$ is conditionally convergent and $\textstyle \sum_{n \in C \cap H}a^1_n = -\infty$, Lemma~\ref{lem:disjoint} implies that
$$\textstyle \sum_{n \in E \cup (C \cap G)}a^1_n = \sum_{n \in (E \cup C) \setminus (C \cap H)}a^1_n = \sum_{n \in E \cup C}a^1_n - \sum_{n \in C \cap H}a^1_n = \infty.$$

Because $\sum_{n \in E}a^2_n = \infty$ and $\sum_{n \in C \cap G}a^2_n$ is absolutely convergent, we have $\sum_{n \in E \cup (C \cap G)}a^2_n = \infty$.

Finally, because $\sum_{n \in E}a^3_n$ is absolutely convergent and $\sum_{n \in C \cap G}a^3_n = \infty$, we have $\sum_{n \in E \cup (C \cap G)}a^3_n = \infty$.
\end{proof}



\section{A counterexample for four series}\label{sec:example}

This section contains an example of four conditionally convergent series such that no $A \sub \N$ sends all four series to infinity. The example is due to Fedor Nazarov, and was posted by him to the online forum MathOverflow \cite{fedja} in response to a question posted there by the author. It is reproduced here with his permission.

To construct this example, begin by partitioning $\N$ into adjacent intervals $I_1, I_2, I_3, \dots$ of increasing length. Let $b_m$ denote the length of $I_m$. We do not specify at this point in the proof exactly how fast the function $m \mapsto b_m$ increases; let us just say for now that it is some rapidly increasing function. For convenience, we shall take each $b_m$ to be an even number, so that the first member of every $I_m$ is an odd number.

We now define our collection of $4$ series by specifying the terms of each series on each of the disjoint intervals $I_m$. Let us denote the four series by $\sum_{n \in \N} a^1_n$, $\sum_{n \in \N} a^2_n$, $\sum_{n \in \N} a^3_n$, and $\sum_{n \in \N} a^4_n$. For each $m \in \N$ and $i \in \{1,2,3,4\}$, define $a^i_n$ for all $n \in I_m$ as follows:
\begin{itemize}

\item If $m$ is odd and $n \in I_m$, then 
\begin{itemize}
\item[$\circ$] $a_n^1 = a^2_n = \frac{1}{m}$ for odd $n$ and $a_n^1 = a^2_n = -\frac{1}{m}$ for even $n$,
\item[$\circ$] $a_n^3 = \frac{1}{b_m}$ for odd $n$ and $a_n^3 = -\frac{1}{b_m}$ for even $n$,
\item[$\circ$] $a_n^4 = 0$ for all $n$.
\end{itemize}

\item If $m$ is even and $n \in I_m$, then 
\begin{itemize}
\item[$\circ$] $a_n^1 = -a^2_n = \frac{1}{m}$ for odd $n$ and $a_n^1 = -a^2_n = -\frac{1}{m}$ for even $n$,
\item[$\circ$] $a_n^3 = 0$ for all $n$.
\item[$\circ$] $a_n^4 = \frac{1}{b_m}$ for odd $n$ and $a_n^4 = -\frac{1}{b_m}$ for even $n$,
\end{itemize}

\end{itemize}
Explicitly, our $4$ series look like this on $I_m$ when $m$ is odd:

\vspace{-4mm}
\renewcommand*{\arraystretch}{2}
$$\begin{matrix}

\text{ series } 1: \ \, & \textstyle + \frac{1}{m} \ & \textstyle - \frac{1}{m} \ & \textstyle + \frac{1}{m} \ & \textstyle - \frac{1}{m} \ & \textstyle + \frac{1}{m} \ & \textstyle - \frac{1}{m} \ & \textstyle + \frac{1}{m} \ & \textstyle - \frac{1}{m} & \ \dots \\

\text{ series } 2: \ \, & \textstyle + \frac{1}{m} \ & \textstyle - \frac{1}{m} \ & \textstyle + \frac{1}{m} \ & \textstyle - \frac{1}{m} \ & \textstyle + \frac{1}{m} \ & \textstyle - \frac{1}{m} \ & \textstyle + \frac{1}{m} \ & \textstyle - \frac{1}{m} & \ \dots  \\

\text{ series } 3: \ \, & \textstyle + \frac{1}{b_m} \, & \textstyle - \frac{1}{b_m} \, & \textstyle + \frac{1}{b_m} \, & \textstyle - \frac{1}{b_m} \, & \textstyle + \frac{1}{b_m} \, & \textstyle - \frac{1}{b_m} \, & \textstyle + \frac{1}{b_m} \, & \textstyle - \frac{1}{b_m} & \ \dots  \\

\text{ series } 4: \ \, & \textstyle + 0 \ & \textstyle + 0 \ & \textstyle + 0 \ & \textstyle + 0 \ & \textstyle + 0 \ & \textstyle + 0 \ & \textstyle + 0 \ & \textstyle + 0 & \ \dots 

\end{matrix}$$
\vspace{-.5mm}

\noindent And our $4$ series look like this on $I_m$ when $m$ is even:

\vspace{-4mm}
\renewcommand*{\arraystretch}{2}
$$\begin{matrix}

\text{ series } 1: \ \, & \textstyle + \frac{1}{m} \ & \textstyle - \frac{1}{m} \ & \textstyle + \frac{1}{m} \ & \textstyle - \frac{1}{m} \ & \textstyle + \frac{1}{m} \ & \textstyle - \frac{1}{m} \ & \textstyle + \frac{1}{m} \ & \textstyle - \frac{1}{m} & \ \dots \\

\text{ series } 2: \ \, & \textstyle - \frac{1}{m} \ & \textstyle + \frac{1}{m} \ & \textstyle - \frac{1}{m} \ & \textstyle + \frac{1}{m} \ & \textstyle - \frac{1}{m} \ & \textstyle + \frac{1}{m} \ & \textstyle - \frac{1}{m} \ & \textstyle + \frac{1}{m} & \ \dots  \\

\text{ series } 3: \ \, & \textstyle + 0 \ & \textstyle + 0 \ & \textstyle + 0 \ & \textstyle + 0 \ & \textstyle + 0 \ & \textstyle + 0 \ & \textstyle + 0 \ & \textstyle + 0 & \ \dots  \\

\text{ series } 4: \ \, & \textstyle + \frac{1}{b_m} \, & \textstyle - \frac{1}{b_m} \, & \textstyle + \frac{1}{b_m} \, & \textstyle - \frac{1}{b_m} \, & \textstyle + \frac{1}{b_m} \, & \textstyle - \frac{1}{b_m} \, & \textstyle + \frac{1}{b_m} \, & \textstyle - \frac{1}{b_m} & \ \dots 

\end{matrix}$$
\vspace{-.5mm}

Assuming that $\lim_{m \to \infty} b_m = \infty$, it is clear that each of these series converges conditionally to $0$.

Before proceeding with a detailed proof of why these $4$ series have the stated property, we describe the idea behind it; this paragraph can be omitted by readers who just want the detailed proof. In each odd block, our first two series match, and so the terms from odd blocks can be used to send the first two series either both to $+\infty$ or both to $-\infty$, but they are not useful for sending one to $\infty$ and the other to $-\infty$. Similarly, the terms from even blocks can be used to send one of the first two series to $\infty$ and the other to $-\infty$, but they are not useful for sending either both to $+\infty$ or both to $-\infty$. However, if the $b_m$ grow fast enough, then sending our third series to infinity requires us to include many terms from odd blocks, and sending our fourth series to infinity requires us to include many terms from even blocks. This tension -- the odd blocks trying to send our first two series to matching infinities and our even blocks trying to do the opposite -- ultimately results in the conclusion that if both series $3$ and $4$ go to infinity, then one of our first two series diverges by oscillation.

Let us now make this argument rigorous. Let $b_1, b_2, \dots, b_m, \dots$ be an increasing sequence of even numbers, with $b_1 = 2$, satisfying the following recurrence relation:
$$b_{m+1} \, \geq \, m^3 (1 + b_1 + b_2 + \dots + b_m)$$
Consider the four series defined above in terms of the $b_m$, let $A \sub \N$, and suppose that $A$ sends the third and fourth series to infinity. We shall show that either the first or the second series diverges by oscillation.

More specifically, let us suppose that $\sum_{n \in A}a^3_n = \sum_{n \in A}a^4_n = \infty$, and then show that this implies $\sum_{n \in A}a^2_n$ diverges by oscillation. 

We include only this one case, because the other three cases are handled in exactly the same way. For example, if instead of supposing that $\sum_{n \in A}a^3_n = \sum_{n \in A}a^4_n = \infty$ we were to suppose that $\sum_{n \in A}a^3_n = \sum_{n \in A}a^4_n = -\infty$, then a similar argument would show that again $\sum_{n \in A}a^2_n$ diverges by oscillation. If we were to suppose either that $\sum_{n \in A}a^3_n = \infty$ and $\sum_{n \in A}a^4_n = -\infty$, or that $\sum_{n \in A}a^3_n = -\infty$ and $\sum_{n \in A}a^4_n = \infty$, then a similar argument would show that $\sum_{n \in A}a^1_n$ diverges by oscillation.

For each $m$, let $\Delta(m)$ denote the imbalance of positive terms over negative terms in $A$ from block $m$:
\begin{align*}
\Delta(m) \,=\, & \card{\set{n \in A \cap I_m}{n \text{ is odd}}} - \card{\set{n \in A \cap I_m}{n \text{ is even}}}.
\end{align*}
Observe, in our third subseries $\sum_{n \in A}a^3_n= \sum_{m \in \N}\left(\sum_{n \in A \cap I_m}a^3_n\right)$, that
\begin{align*}
\textstyle \sum_{n \in A \cap I_m} a_n^3 =
\begin{cases}
\frac{\Delta(m)}{b_m} & \text{ if } m \text{ is odd} \\
0 & \text{ if } m \text{ is even.}
\end{cases}
\end{align*}
It follows that $\Delta(m) > \nicefrac{b_m}{m^2}$ for infinitely many odd $m$ because, if not, then the third subseries cannot grow fast enough to sum to $\infty$. More precisely, if there were some $M$ such that $\Delta(m) \leq \nicefrac{b_m}{m^2}$ for every odd $m \geq M$, then
\begin{align*}
\textstyle \sum_{n \in \N}a^3_n & = \textstyle \sum_{m \geq M} \left( \sum_{n \in I_m \cap A} a_n^3 \right) \\
& \leq \textstyle  \, \sum_{m \geq M,\, m \text{ odd}} \frac{\Delta(m)}{b_m} \, \leq \, \sum_{m \geq M} \frac{1}{m^2} \, < \, \infty,
\end{align*}
contradicting the assumption that $\sum_{n \in A} a^3_n = \infty$. Thus, for infinitely many odd $m$,
$$\Delta(m) \, > \, \frac{b_m}{m^2} \, \geq \, m(1 + b_1+b_2+\dots+b_{m-1}).$$

Now consider the second subseries $\sum_{n \in A}a^2_n= \sum_{m \in \N}\left(\sum_{n \in A \cap I_m}a^2_n\right)$. If $m$ is odd and if $\Delta(m) \, > \, m(1 + b_1+b_2+\dots+b_{m-1})$, then
$$\textstyle \sum_{n \in I_m,\, n \in A} a^2_n \, = \, \frac{-\Delta(m)}{m} \, < \, -1 - b_1-b_2-\dots-b_{m-1},$$
and this is greater in absolute value than all the preceding terms of the subseries combined:
\begin{align*}
\textstyle \card{\sum_{n < \min I_m,\, n \in A} a^2_n} \, & \leq \, \textstyle \sum_{j < m}\card{\sum_{n \in A \cap I_j}a^2_n} \\
& \leq \, \textstyle \sum_{j < m} \frac{b_j}{j} \, < \, b_1+b_2+\dots+b_{m-1}.
\end{align*}
It follows that
$$\textstyle \sum_{n \leq \max I_m,\, n \in A} a^2_n \,<\, -1.$$
This holds for infinitely many $m$, so in particular, the finite partial sum
$$\textstyle \sum_{n \in A,\, n \leq k} a^2_n \,<\, -1$$
for infinitely many $k \in \N$.

By considering the even blocks instead of the odd blocks, and by using the assumption that $\sum_{n \in A} a^4_n = \infty$, a similar argument shows that
$$\textstyle \sum_{n \in A,\, n \leq k} a^2_n \,>\, 1$$
for infinitely many $k \in \N$. Hence $\sum_{n \in A}a^2_n$ diverges by oscillation as claimed.

\end{document}